\newtheorem{thm}{Theorem}
\newtheorem{cor}[thm]{Corollary}
\newcommand{\al}{\alpha}
\newcommand{\de}{\delta}
\newcommand{\mrig}{\mathrel{-\!\!\!\!\!\rightarrow}}
\newcommand{\Rig}{\Rightarrow}
\newcommand{\bcdw}{\mathbin{\boldsymbol\cdot}}
\newcommand{\seteq}{\mathrel{\mbox{\,\textup{:}\!}=\nolinebreak }\,}
\newcommand{\sbA}{{\boldsymbol{A}}}
\newcommand{\sbB}{{\boldsymbol{B}}}
\newcommand{\sbC}{{\boldsymbol{C}}}
\newcommand{\sbT}{{\boldsymbol{T}}}
\newcommand{\ld}{{\backslash}}
\newcommand{\ov}{\overline}
\newcommand{\monus}{\mbox{$\frac{\:{\bf \displaystyle .}\:}{\mbox{}}$}}
\newcommand{\leibniz}{\boldsymbol{\varOmega}}
\newtheorem{theorem}{Theorem}[section]
\newtheorem{lemma}[theorem]{Lemma}
\newtheorem{corollary}[theorem]{Corollary}
\newtheoremstyle{inproof}
{3pt}
{3pt}
{\normalfont}
{}
{\bfseries}
{:}
{.5em}
{}
\theoremstyle{inproof}
\theoremstyle{definition}
\newtheorem{definition}[theorem]{Definition}
\newtheorem{remark}[theorem]{Remark}
\DeclareMathOperator{\Con}{\mathit{Con}}
\DeclareMathOperator{\Sg}{Sg}
\DeclareMathOperator{\Cg}{\Theta}
\providecommand{\monus}{
	\mathbin{
		\vphantom{+}
		\text{
			\mathsurround=0pt 
			\ooalign{
				\noalign{\kern-.35ex}
				\hidewidth$\smash{\cdot}$\hidewidth\cr 
				\noalign{\kern.35ex}
				$-$\cr 
			}%
		}%
	}%
}
\newcommand{\alg}[1]{{\boldsymbol{#1}}}
\newcommand{\spc}[1]{{\mathcal{#1}}}
\newcommand{\cl}[1]{{\mathsf{#1}}}
\DeclareMathOperator{\Hop}{\mathbb{H}}
\DeclareMathOperator{\Sop}{\mathbb{S}}
\DeclareMathOperator{\Pop}{\mathbb{P}}
\DeclareMathOperator{\Puop}{\mathbb{P}_\mathbb{U}}
\DeclareMathOperator{\Iop}{\mathbb{I}}
\DeclareMathOperator*{\amper}{\&}
\DeclareMathOperator*{\bigamper}{\mbox{\larger[2]$\amper$}}
\providecommand*{\Dashv}{%
	\mathrel{%
		\mathpalette\@Dashv\vDash
	}%
}
\newcommand*{\@Dashv}[2]{%
	\reflectbox{$\m@th#1#2$}%
}
\begin{document}
\title[Epimorphisms, definability and cardinalities]{Epimorphisms, definability and cardinalities}
\author{T.\ Moraschini}
\address{Institute of Computer Science, Academy of Sciences of the Czech Republic, Pod Vod\'{a}renskou v\v{e}\v{z}\'{i} 2, 182 07 Prague 8, Czech Republic.}
\email{moraschini@cs.cas.cz}
\author{J.G.\ Raftery}
\address{Department of Mathematics and Applied Mathematics,
 University of Pretoria,
 Private Bag X20, Hatfield,
 Pretoria 0028, South Africa}
\email{{james.raftery@up.ac.za}}
\author{J.J.\ Wannenburg}
\address{Department of Mathematics and Applied Mathematics,
 University of Pretoria,
 Private Bag X20, Hatfield,
 Pretoria 0028, and DST-NRF Centre of Excellence in Mathematical and Statistical Sciences (CoE-MaSS), South Africa}
\email{{jamie.wannenburg@up.ac.za}}
\keywords{Epimorphism, prevariety, quasivariety, Beth definability, algebraizable logic, equivalential logic.
\vspace{1mm}
\\{\vspace{1mm}}
{\quad 2010 {\em Mathematics Subject Classification\/}: 03G27, 08C15.}}
%
\thanks{This work received funding from the European Union's Horizon 2020 research and innovation programme under the Marie Sklodowska-Curie grant
agreement No~689176 (project ``Syntax Meets Semantics: Methods, Interactions, and Connections in Substructural logics"). The first author was also supported by
the project GA17-04630S of the Czech Science Foundation (GA\v{C}R).
The second author was supported in part by the National Research Foundation of South Africa (UID 85407).
The third author was supported by the DST-NRF Centre of Excellence in Mathematical and Statistical Sciences (CoE-MaSS), South Africa.
Opinions expressed and conclusions arrived at are those of the authors and are not necessarily to be attributed to the CoE-MaSS}

\begin{abstract}
We
characterize, in syntactic terms,
the ranges of epimorphisms
in an arbitrary class of similar first-order structures (as opposed to an elementary class).  This allows us
to strengthen a result of Bacsich,
as follows: in any prevariety having at most $\mathfrak{s}$ non-logical symbols and an
axiomatization requiring at most $\mathfrak{m}$ variables, if the epimorphisms into
structures with at most
$\mathfrak{m}+\mathfrak{s}+\aleph_0$
elements are surjective, then so are all of the epimorphisms.
Using these facts, we formulate and prove manageable `bridge theorems', matching the
surjectivity of \emph{all} epimorphisms in the algebraic counterpart of a logic $\,\vdash$ with
suitable infinitary definability properties of $\,\vdash$, while \emph{not} making the standard
but awkward assumption that $\,\vdash$ comes furnished with a proper class of variables.
\end{abstract}


\maketitle

\makeatletter
\renewcommand{\labelenumi}{\text{(\theenumi)}}
\renewcommand{\theenumi}{\roman{enumi}}
\renewcommand{\theenumii}{\roman{enumii}}
\renewcommand{\labelenumii}{\text{(\theenumii)}}
\renewcommand{\p@enumii}{\theenumi(\theenumii)}
\makeatother

{\allowdisplaybreaks

\section{Introduction}\label{sec:introduction}

`Bridge theorems' of abstract algebraic logic
\cite{BP89,Cze01,Fon16,FJP03}
have the form
\[
\textup{$\,\vdash$ has logical property $P$ iff $\cl{K}$ has algebraic property $Q$,}
\]
where $\,\vdash$ is an algebraizable logic and $\cl{K}$ is its algebraic counterpart---in which case
$\cl{K}$ is a prevariety, at least.  Examples include connections between Beth definability
properties and the surjectivity of epimorphisms.
Roughly speaking,
Beth properties ask that,
whenever a
set $\Gamma$ of formal assertions about
$\vec{x},\vec{z}$
defines $\vec{z}$ implicitly
in terms of $\vec{x}$,
then it does so explicitly as well.  (Greater precision will be offered in
Section~\ref{sec:beth definability properties}.)

In fact, there are bridge
theorems characterizing several different Beth-style properties by
demands that various \emph{kinds} of epimorphism be surjective.
The first of these was proved by N\'{e}meti; see \cite[Thm.\,5.6.10]{HMT7185}.  Others were
provided in \cite{Hoo00,Hoo01,BH06,Bud08}.  Concrete antecedents involving special families of logics can
be found in Maksimova's work, e.g., see \cite{GM05,Mak00,Mak03}.

Blok and Hoogland \cite{BH06} showed that the straightforward \emph{ES property}---i.e., the demand
that \emph{all} epimorphisms
in $\cl{K}$ be surjective---corresponds to an \emph{infinite} version of the Beth property,
where no cardinal is assumed to bound the lengths of the sequences $\vec{x},\vec{z}$, nor the size of
$\Gamma$.  When testing for implicit definability, we need to substitute expressions for the variables
$\vec{z}$, and this may introduce fresh variables.  For such reasons, in the general bridge
theorem connecting the ES and infinite Beth properties, the logic $\,\vdash$ needs to be formulated
with a \emph{proper class} of variables.

On the other hand, many familiar algebraizable logics are finitary, with only countably many connectives,
and are formalized using a
countable set of variables---as nothing more is required for their axiomatization.  When working with such
logics, one would prefer a version of the infinite Beth property that also presupposes only
a \emph{set} of variables,
but is still provably equivalent to the unrestricted ES property for the algebraic counterpart.

It seems, however, that the published literature of abstract algebraic logic contains no such bridge theorem.
Analyzing the core proof in \cite{BH06}, we find that the gap would be filled by the following claim,
where $\mathfrak{m},\mathfrak{s}$ are cardinals:
\begin{quote}
If a prevariety $\cl{K}$ lacking the ES property has just $\mathfrak{s}$ operation symbols and an
axiomatization that uses only $\mathfrak{m}$ variables, then there is a non-surjective
$\cl{K}$--epimorphism $\sbA\mrig\sbB$, where $\sbB$ has at most $\mathfrak{m}+\mathfrak{s}+\aleph_0$
elements.
\end{quote}
This claim (generalized to first-order structures) is Theorem~\ref{thm:ESproperty}
of the pre\-sent paper.  One of its specializations (which follows also from a result of Bacsich \cite{Bac74}) says that
%
%
\begin{quote}
in a \emph{quasivariety} of countable type, if the epimorphisms into countable
structures are surjective, then so are all of the epimorphisms.
\end{quote}
%
%
We use these facts to obtain more manageable bridge
theorems (Theorem~\ref{bridge localized},
Corollaries~\ref{bridge localized cor}, \ref{bridge localized cor 2}),
connecting the unrestricted ES property with suitably \emph{localized}
infinite Beth properties for logics formalized with limited
variables and connectives.
The bridge theorems cater for all equivalential logics and specialize to the algebraizable ones.

The proof of Theorem~\ref{thm:ESproperty} rests on a general syntactic
characterization of the ranges of epimorphisms
(Theorem~\ref{thm:campercholi-rtl}), which widens the scope of Bacsich \cite[Thm.~1]{Bac74}
and
Campercholi \cite[Thm.~3]{Cam}.  (The earlier accounts apply only to classes
closed under ultraproducts.)

\section{Atomic Consequence}\label{sec:atomic consequence}

We work in the conservative extension NBG of ZFC (i.e., in the class theory of von Neumann, Bernays and G\"{o}del,
including the axiom of choice).

The \emph{cardinality of the signature} of a first-order language is the sum of the cardinalities
of its (disjoint) sets of operation and relation symbols.  The ranks of these symbols are assumed finite,
and nonzero in the case of relations.  Only first-order signatures will be considered.  A
signature is \emph{algebraic} if it has no relation symbols.

For a given signature, a structure denoted by $\spc{A}$
is assumed to have universe $A$ (a non-empty set)
and algebra reduct $\sbA=\langle A;O\rangle$
for a suitable set $O$ of operations on $A$, so that $\spc{A}=\langle A;O,R\rangle$ for a suitable set $R$
of relations on $A$.
The subalgebra of $\sbA$ and the substructure of $\spc{A}$ generated by a set $X\subseteq A$
have the same universe; the latter
is denoted by
$\Sg^\spc{A}(X)$.
A homomorphism $h\colon\spc{A}\mrig\spc{B}$
is understood to preserve the relations in $R$ (as well as the operations
in $O$), but it need not reflect the relations.  As usual, the \emph{kernel} $\{\langle a,a'\rangle\in A^2:h(a)=h(a')\}$
of $h$ is denoted by $\ker h$.

It is convenient here to have recourse to a fixed proper
class $\mathit{Var}$ of \emph{variables}.
For each sub\emph{set} $X$ of $\mathit{Var}$, we use $\sbT(X)$ to denote the absolutely free algebra (a.k.a.\
the term algebra) generated by $X$, with respect to the operation symbols of the
signature
under discussion.
Given a class $\Sigma\cup\{p\}$ of expressions over $\mathit{Var}$,
the set of variables occurring in $p$ shall be denoted by $\mathit{var}(p)$, while $\mathit{var}(\Sigma)\seteq\bigcup_{s\in\Sigma}\mathit{var}(s)$.

Recall that the \emph{atomic formulas} of a signature
are either \emph{equations} $\varphi\approx\psi$
or expressions $r(\varphi_1,\dots,\varphi_n)$, where $r$ is a relation symbol and
$\varphi,\psi,\varphi_1,\dots,\varphi_n$ are terms over $\mathit{Var}$.  We tacitly
identify $\varphi\approx\psi$ with the pair $\langle\varphi,\psi\rangle$.

If, upon \emph{introducing} a set $\Sigma$ of atomic formulas, we denote it as $\Sigma(\vec{x})$,
this signifies that the elements of $\mathit{var}(\Sigma)$ all belong to the sequence $\vec{x}$ of
\emph{distinct} variables.
(Strictly speaking, like any sequence, $\vec{x}$ is a function whose domain is some ordinal $\delta$,
so $\left|\vec{x}\right|=\left|\delta\right|$, which need not be finite.)
In this context,
given a structure $\spc{A}$ of matching signature, the notation $\spc{A}\models\Sigma(\vec{a})$
has the standard model-theoretic meaning, which entails in particular that $\vec{a}$ is a
$\de$--indexed sequence of not necessarily distinct elements of $A$ (briefly: $\vec{a}\in A$).
\begin{definition}\label{def:atomic consequence}
For any class
$\cl{K}$
of similar structures,
we use
the notation
${\Sigma(\vec{x})\models_\cl{K}p(\vec{x})}$ (or $\Sigma\models_\cl{K}p$)
to signify
that $\Sigma(\vec{x})\cup\{p(\vec{x})\}$ is a set
of atomic formulas in the signature of $\cl{K}$ and
\[
\textup{whenever $\spc{A}\in\cl{K}$ and $\spc{A}\models\Sigma(\vec{a})$ (where $\vec{a}\in A$),
then $\spc{A}\models p(\vec{a})$.}
\]
The displayed demand may be paraphrased a little more precisely as
\begin{tabbing}
\quad\quad \=\textup{for any $\spc{A}\in\cl{K}$
and any homomorphism $h\colon\sbT(\mathit{var}(\Sigma\cup\{p\}))\mrig\sbA$,}\\
\> \textup{if
$\spc{A}\models\Sigma(h[\vec{x}])$, then
$\spc{A}\models p(h[\vec{x}])$.}
\end{tabbing}
Here,
$h[\vec{x}]$ is the sequence obtained by applying $h$ to every
item of $\vec{x}$.  When $\Sigma\models_\cl{K}p$, we say that
`$\Sigma/p$ \emph{is validated by} (each
member of) $\cl{K}$'.  The ordered pair $\Sigma/p$ will then be referred to as an
\emph{(atomic) implication}; we call it
a \emph{quasi-atomic formula}
if $\Sigma$ is finite.\,\footnote{\,It could be rendered more suggestively as
$\left(\mbox{\normalsize \&}\, \Sigma\right)
\Longrightarrow p$, or as $\left(
\mbox{\normalsize \&}_{s\in\Sigma\,}\,
s\right)
\Longrightarrow
p$.}
\end{definition}

In this paper, we shall not need to deal with formulas more complex
than implications.
Any syntactic substitution instance $\Sigma(\vec{\varphi})$ of $\Sigma(\vec{x})$ shall have the form
$\textup{$\{s(h[\vec{x}]):s\in \Sigma\}$}$, where $h\colon\sbT(\mathit{var}(\Sigma))\mrig\sbT(Y)$
is a homomorphism, $Y$ being a subset of $\mathit{Var}$.

The class operator symbols $\Iop$, $\Hop$, $\Sop$, $\Pop$ and $\Puop$
stand, respectively, for closure under isomorphic
and homomorphic
(surjective) images, substructures,
(set-indexed) direct products and ultra\-products.

A class $\cl{K}$ of similar structures is said to be \emph{axiomatized} by a class $\Xi$ of
implications if $\cl{K}$ is the class of all structures validating all of the implications in $\Xi$.
In this case, $\cl{K}$ is a \emph{prevariety}, i.e., it is closed under $\Iop$, $\Sop$ and $\Pop$.

Conversely, every prevariety is axiomatized by a class $\Xi$ of implications; see Banaschewski and
Herrlich \cite{BH76}.  The claim that we cannot always find a \emph{set} to play the role of $\Xi$ (or
equivalently, of $\mathit{var}(\Xi)$) is consistent with NBG.
Its negation  (i.e.,
the claim that sets suffice) is consistent with NBG
if huge cardinals exist.
These facts were established by Ad\'{a}mek \cite{Adamek1990}.

A prevariety can be axiomatized by a class---w.l.o.g.\ a set---of
quasi-atomic [resp.\ atomic] formulas iff it is closed under $\Puop$ [resp.\ $\Hop$], in which
case we call it a \emph{quasivariety} [resp.\ a \emph{variety}], even if its members are not pure
algebras.  The reader may consult \cite[Ch.~2]{Gor98} for proofs of these well-known results (which
originate in \cite{Bir35,GL73,Mal66}).
\begin{definition}
\label{def:kappa-compact}
For any class $\cl{K}$ of similar structures and any infinite cardinal $\mathfrak{m}$, the relation $\models_\cl{K}$ is
said to be \emph{$\mathfrak{m}$--compact} provided that,
\begin{center}
whenever $\Sigma \models_{\cl{K}}
p$, then
$\Sigma' \models_{\cl{K}}
p$ for some $\Sigma' \subseteq \Sigma$ with $|\Sigma'| < \mathfrak{m}$.
\end{center}
We say that $\,\models_\cl{K}$ is \emph{finitary} if it is $\aleph_0$--compact.
\end{definition}
If $\cl{K}$ is closed
under $\mathbb{P}_\mathbb{U}$
(e.g., if $\cl{K}$ is a quasivariety), then $\,\models_\cl{K}$ is finitary.
The terminology of Definition~\ref{def:kappa-compact} applies, more
generally, to arbitrary relations from subsets of a class to elements of the same class.

\section{Epimorphisms}\label{sec:epimorphisms}

A morphism $h$ in a category $\cl{C}$ is called a ($\cl{C}$--)\,{\em epimorphism\/} provided that,
for any two $\cl{C}$--morphisms $f,g$ from the co-domain of $h$ to a single object,
\[
\textup{if $f\circ h=g\circ h$, then $f=g$.}
\]

We shall not distinguish notationally between a class $\cl{K}$ of similar (first-order) structures and
the concrete category of all homomorphisms between its members.  Clearly, within such a category, every surjective
homomorphism is an epimorphism.  If the converse holds, then $\cl{K}$ is said to have the {\em epimorphism
surjectivity property}, or briefly, the {\em ES property}.

A substructure $\spc{D}$ of a structure $\spc{B}\in\cl{K}$ is said to be
($\cl{K}$--)\,\emph{epic} in $\spc{B}$ if
each homomorphism from $\spc{B}$
to a member of $\cl{K}$ is determined by its restriction to $\spc{D}$.
(This is equivalent to the demand that the inclusion map $\spc{D}\mrig\spc{B}$ be an epimorphism, provided that $\cl{K}$
is closed under $\mathbb{S}$.)
Of course, a $\cl{K}$--morphism $h\colon\spc{A}\mrig\spc{B}$
is an epimorphism iff $h[\spc{A}]$ is an epic substructure of $\spc{B}$.  Therefore, $\cl{K}$ has the ES property iff each of its members
has no proper epic substructure.

The significance of epimorphisms for formal deductive systems (alluded to in the introduction) will
be elaborated in Section~\ref{sec:beth definability properties}.
For the moment, however, structures are our concern.  Already in the context of \emph{algebras},
we may recall that rings and distributive lattices each form varieties lacking the ES property.  This
reflects the absence of unary terms defining multiplicative inverses in rings, and complements in distributive
lattices, despite the uniqueness of those entities when they exist.

The connection between such `implicitly defined' constructs and epimorphisms
was remarked upon in the algebraic literature long ago (e.g., see
Freyd \cite[p.\,93]{Fre64} and Isbell \cite{Isb66}).  It was given a syntactically sharper characterization
by Bacsich \cite[Thm.~1]{Bac74} and by Campercholi \cite[Thm.~3]{Cam} (see
Remark~\ref{remk:dominions} below), but their accounts are
confined, respectively, to universal classes and to classes closed under ultra\-products.  With logical applications in mind,
we extend the result to arbitrary classes in the next theorem.


\begin{theorem}
\label{thm:campercholi-rtl}
Let\/ $\cl{K}$ be any class of similar structures, $\spc{A}$ a substructure of\/ $\spc{B} \in \cl{K}$ and\/ $Z\subseteq B
\ld A$\textup{,}
where\/
$\spc{B} = \Sg^{\spc{B}}(A\cup Z)$\textup{.}
Then the following conditions are equivalent.
\begin{enumerate}
\item\label{camp1}
$\spc{A}$ is\/ $\cl{K}$--epic in $\spc{B}$.

\item\label{camp2}
For each $b\in Z$\textup{,} there is a set\/ $\Sigma=\Sigma(\vec{x},\vec{z},v)$ of atomic formulas such that
$\spc{B}\models\Sigma(\vec{a},\vec{c},b)$ for suitable\/ $\vec{a}\in A$ and\/ $\vec{c}\in B$\textup{,} and
\begin{equation}\label{eq:camp}
\Sigma(\vec{x},\vec{z},v_1)\cup\Sigma(\vec{x},\vec{y},v_2)\models_\cl{K}v_1\approx v_2.
\end{equation}
\end{enumerate}
\noindent
In this case, for each $b\in Z$\textup{,} we can arrange that\/ $\left|\vec{z}\right|\leq\left|Z\right|$ and\/
$\vec{c}\in Z
$\textup{.}
\end{theorem}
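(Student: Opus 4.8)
The plan is to prove the equivalence (\ref{camp1})$\Leftrightarrow$(\ref{camp2}) by a direct, two-directional argument, using the amalgamation-type construction that is standard for characterizing epic subobjects. The key observation is that $\spc{A}$ being $\cl{K}$--epic in $\spc{B}$ means exactly that no element of $Z$ can be ``split'' by a pair of homomorphisms agreeing on $A$; condition (\ref{camp2}) is a syntactic encoding of the failure of such splitting, element by element.

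For the direction (\ref{camp2})$\Rightarrow$(\ref{camp1}): suppose $f,g\colon\spc{B}\mrig\spc{C}$ are $\cl{K}$--homomorphisms with $f{\restriction}_A = g{\restriction}_A$. Since $\spc{B}=\Sg^\spc{B}(A\cup Z)$, it suffices to show $f(b)=g(b)$ for every $b\in Z$. Fix such a $b$, take the set $\Sigma(\vec{x},\vec{z},v)$ and the witnesses $\vec{a}\in A$, $\vec{c}\in B$ given by (\ref{camp2}). Apply $f$ to the statement $\spc{B}\models\Sigma(\vec{a},\vec{c},b)$ to get $\spc{C}\models\Sigma(f[\vec{a}],f[\vec{c}],f(b))$, and likewise $\spc{C}\models\Sigma(g[\vec{a}],g[\vec{c}],g(b))$; since $f[\vec{a}]=g[\vec{a}]$, we may feed both into the implication (\ref{eq:camp}) — reading $\vec{z}\mapsto f[\vec{c}]$, $\vec{y}\mapsto g[\vec{c}]$, $v_1\mapsto f(b)$, $v_2\mapsto g(b)$ — and conclude $f(b)=g(b)$, as $\spc{C}\in\cl{K}$.

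For the direction (\ref{camp1})$\Rightarrow$(\ref{camp2}): fix $b\in Z$. Enumerate $Z$ as $\vec{z}$ (so $|\vec{z}|\le|Z|$) with $b$ among its entries, and enumerate the elements of $A$ actually needed as $\vec{a}$. The natural candidate for $\Sigma(\vec{x},\vec{z},v)$ is the ``atomic diagram'' of $\spc{B}$ relative to these parameters: i.e., take a surjective homomorphism $\sbT(\vec{x}\cup\vec{z}\cup\{v\})\mrig\sbB$ sending $\vec{x}$ onto the relevant part of $A$, $\vec{z}$ onto $Z$, and $v$ onto $b$ (possible since $\spc{B}=\Sg^\spc{B}(A\cup Z)$), and let $\Sigma$ be the set of all atomic formulas in these variables that hold in $\spc{B}$ under this assignment. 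Then automatically $\spc{B}\models\Sigma(\vec{a},\vec{c},b)$ with $\vec{c}\in Z$. The content to verify is the implication (\ref{eq:camp}). So suppose $\spc{C}\in\cl{K}$ and a homomorphism $\sbT(\vec{x}\cup\vec{z}\cup\vec{y}\cup\{v_1,v_2\})\mrig\sbC$ realizes both $\Sigma(\vec{x},\vec{z},v_1)$ and $\Sigma(\vec{x},\vec{y},v_2)$. Restricting this assignment to $\vec{x}\cup\vec{z}\cup\{v_1\}$ and composing with the surjection onto $\sbB$, one gets (since $\Sigma$ contains all atomic truths, in particular every equation between terms that collapse in $\spc{B}$) a well-defined homomorphism $f\colon\spc{B}\mrig\spc{C}$ with $f[\vec{a}]=$ the image of $\vec{x}$ and $f(b)=$ image of $v_1$; similarly the $\vec{y}$-branch gives $g\colon\spc{B}\mrig\spc{C}$ with $g[\vec{a}]=$ image of $\vec{x}$ and $g(b)=$ image of $v_2$. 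Thus $f{\restriction}_A$ and $g{\restriction}_A$ agree (both send $\vec{a}$ to the common $\vec{x}$-image, and $A$ is generated by — or at least the relevant part is captured by — $\vec{x}$; one arranges $\vec{x}$ to enumerate a generating set of $\spc{A}$, or handles the general case by noting epicness only constrains $f,g$ through their values on such a generating set). Since $\spc{A}$ is epic in $\spc{B}$, $f=g$, so in particular $f(b)=g(b)$, which is exactly $v_1\approx v_2$ under the assignment. This gives (\ref{eq:camp}), and the final sentence of the theorem ($|\vec{z}|\le|Z|$, $\vec{c}\in Z$) is built into the construction.

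The main obstacle I anticipate is the bookkeeping around the variable $\vec{x}$ representing $A$: strictly, $A$ may be larger than any set we want to enumerate, so one must be careful that the ``atomic diagram'' $\Sigma$ only mentions the finitely-or-$|Z|$-many generators actually involved in expressing $b$, while still ensuring that two homomorphisms $f,g\colon\spc{B}\mrig\spc{C}$ reconstructed from a model of $\Sigma\cup\Sigma$ genuinely agree on all of $A$. The clean fix is to observe that since $\spc{B}=\Sg^\spc{B}(A\cup Z)$, it is also generated by $A\cup Z$ as a structure, and a homomorphism out of $\spc{B}$ is determined by its restriction to $A\cup Z$; so it is enough that $f,g$ agree on (the image of) $\vec{x}$ and we only ever need $\vec{x}$ to enumerate enough of $A$ to witness the atomic facts in $\Sigma$ — agreement of $f,g$ on the \emph{rest} of $A$ is then forced by epicness being applied to the inclusion restricted appropriately, or more simply by first reducing to the case where $\vec{x}$ enumerates a generating set of $\spc{A}$ and invoking that $\spc{A}\subseteq\spc{B}$ is epic. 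Making this reduction precise, so that the quantifier ``there is a set $\Sigma$'' in (\ref{camp2}) is honestly met with the \emph{right} parameters, is where the care lies; everything else is a transcription between satisfaction of atomic formulas and the existence of homomorphisms.
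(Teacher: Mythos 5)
Your proposal is correct and follows essentially the same route as the paper: take $\Sigma$ to be the full atomic diagram of $\spc{B}$ over variables for $A\cup Z$ (with $v$ marking $b$), reconstruct two homomorphisms $\spc{B}\mrig\spc{C}$ from any model of $\Sigma(\vec{x},\vec{z},v_1)\cup\Sigma(\vec{x},\vec{y},v_2)$ using the fact that $\Sigma$ contains the kernel of the canonical surjection, and invoke epicness; the converse direction is the same transcription argument in both. The one obstacle you flag dissolves in the paper's treatment: since $A$ is a set and $\mathit{Var}$ is a proper class, one simply takes $\vec{x}$ to be a bijective copy $\{x_a : a\in A\}$ of \emph{all} of $A$ (the theorem bounds only $\left|\vec{z}\right|$, not $\left|\vec{x}\right|$), so the reconstructed homomorphisms agree on all of $A$ by construction and no reduction to a generating set is needed.
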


\begin{proof}
(\ref{camp1})\,$\Rig$\,(\ref{camp2}):
Let $\vec{x}_A$ and $\vec{x}_Z$
be sequences of variables whose disjoint ranges,
$\{x_a:a\in A\}$ and $\{x_b:b\in Z\}$,
are bijective copies of $A$
and $Z$, respectively.
Let $\sbT=\sbT(\vec{x}_A,\vec{x}_Z)$ be the absolutely free algebra generated by the combined ranges of
$\vec{x}_A$ and $\vec{x}_Z$.
Let $h \colon \sbT
\mrig \sbB$ be the homomorphism such that $h(x_a)=a$ and $h(x_b)=b$ for all $a \in A$ and $b \in Z$.
Note that $h$ is surjective, as $A\cup Z$ generates $\sbB$.

Now fix $b \in Z$.
Let $\vec{x}_{Z\setminus\{b\}}$ denote the
subsequence of $\vec{x}_Z$ whose range omits $x_b$.
For notational convenience, assume that $\vec{x}_Z$ is ordered as $\vec{x}_{Z\setminus\{b\}},x_b$.

Let $\Sigma=\Sigma(\vec{x}_A,\vec{x}_{Z\setminus\{b\}},x_b)$ be the set of all atomic formulas
$r(\vec{x}_A,\vec{x}_Z)$
such that $\spc{B}\models r(h[\vec{x}_A],h[\vec{x}_Z])$.
Note that $\ker h\subseteq\Sigma$, as $\Sigma$ includes equations.

Because $\spc{B} \models \Sigma(h[\vec{x}_A],h[\vec{x}_{Z\setminus\{b\}}],h(x_b))$, where $h(x_b)=b$ and the items in $h[\vec{x}_A]$ and $h[x_{Z\setminus\{b\}}]$ belong to $A$ and $Z$ (respectively), it remains only to prove that
$\Sigma(\vec{x}_A,\vec{x}_{Z\setminus\{b\}},x_b)\cup
\Sigma(\vec{x}_A,\vec{y}_{Z\setminus\{b\}},y_b) \models_\cl{K} x_b \approx y_b$.

Let $\spc{C} \in \cl{K}$ and let $g_1,g_2\colon\sbT\mrig\sbC$ be homomorphisms that agree on $\vec{x}_A$, where
$\spc{C}\models r(g_1[\vec{x}_A],g_1[\vec{x}_Z])$ and
$\spc{C}\models r(g_2[\vec{x}_A],g_2[\vec{x}_Z])$
for all
$r(\vec{x}_A,\vec{x}_Z)\in \Sigma$.  Then $\ker h\subseteq\ker g_1\cap\ker g_2$.
We must show that $g_1(x_b)=g_2(x_b)$.

Let $j\in\{1,2\}$.  Because $h$ is surjective and $\ker h
\subseteq \ker g_j$,
the function $h(\varphi)\mapsto g_j(\varphi)$ is a well defined homomorphism $f_j\colon\sbB\mrig\sbC$.
In fact, $f_j$ is a homomorphism from $\spc{B}$ to $\spc{C}$, by the definitions of $\Sigma$ and $g_j$.
For each $a \in A$, we have $f_j(a) = f_j(h(x_a)) = g_j(x_a)$,
but $g_1$ and $g_2$ agree at $x_a$, so
$f_1|_A = f_2|_A$.  Then $f_1 = f_2$, since $\spc{C} \in \cl{K}$ and $\spc{A}$ is $\cl{K}$--epic in
$\spc{B}$. Therefore,
$\textup{$
g_1(x_b) = f_1(b) = f_2(b) = g_2(x_b)
$}$, as required.

(\ref{camp2})\,$\Rig$\,(\ref{camp1}):
Let
$g,h\colon\spc{B}\mrig\spc{C}\in\cl{K}$ be homomorphisms, with
$g|_A = h|_A$. We must show that $g = h$.  As $A\cup Z$ generates $\spc{B}$,
it suffices to prove that $g|_Z = h|_Z$.
Let $b \in Z$, and let $\Sigma$ and $\vec{a}\in A$ and $\vec{c}\in B$ be as in (\ref{camp2}).
From $\spc{B}\models\Sigma(\vec{a},\vec{c},b)$ we infer $\spc{C}\models\Sigma(g[\vec{a}],g[\vec{c}],g(b))$ and
$\spc{C}\models\Sigma(h[\vec{a}],h[\vec{c}],h(b))$.
But $g[\vec{a}]=h[\vec{a}]$, as $g|_A=h|_A$, so $g(b)=h(b)$, by (\ref{eq:camp}).
Thus, $g|_Z=h|_Z$.
\end{proof}

\begin{remark}\label{remk:finitarity}
In Theorem~\ref{thm:campercholi-rtl}(\ref{camp2}),
if $\,\models_{\cl{K}}$ is finitary (e.g., if $\cl{K}$ is closed under ultraproducts),
then each $\Sigma$ can be chosen finite.
\end{remark}

\begin{remark}\label{remk:dominions}
Given $\spc{A}\in\mathbb{S}(\spc{B})$, it is sometimes convenient to define the
$\cl{K}$--\emph{dominion} $\textup{dom}^\cl{K}_\spc{B}\spc{A}$ (of $\spc{A}$ in $\spc{B}$) as the set of all
$b\in B$ such that any two homomorphisms from $\spc{B}$ to a member of $\cl{K}$ will agree at $b$ if
they agree on $A$.  Then $\spc{A}$ is $\cl{K}$--epic in $\spc{B}$ iff
$\textup{dom}^\cl{K}_\spc{B}\spc{A}=B$.
In the above proof, if we choose $Z=B\ld A$, then the argument shows that, for any $b\in B$, we have
$b\in\textup{dom}^\cl{K}_\spc{B}\spc{A}$ iff there exist a set of atomic formulas $\Sigma(\vec{x},\vec{z},v)$
such that $\spc{B}\models\Sigma(\vec{a},\vec{c},b)$ for suitable\/ $\vec{a}\in A$ and\/ $\vec{c}\in B$\textup{,}
and (\ref{eq:camp}) holds.  Restricting to the case where $\cl{K}$ is closed under $\mathbb{P}_\mathbb{U}$
(whence each $\Sigma$ can be chosen finite, by Remark~\ref{remk:finitarity}), we obtain a more elementary proof of
the aforementioned result of Bacsich \cite[Thm.~1]{Bac74}, and likewise Campercholi
\cite[Thm.~3]{Cam}.\,\footnote{\,Dominions were introduced (for algebras) by Isbell \cite{Isb66}; also see
\cite{Bud04,Bud08,Hig88,Was01}.}
\end{remark}


\begin{remark}\label{remk:almost total}
In a structure $\spc{B}$, a substructure $\spc{A}$ is said to be \emph{almost total} if $\spc{B}=\Sg^\spc{B}(A\cup Z)$
for some \emph{finite} $Z\subseteq B$.
By Theorem~\ref{thm:campercholi-rtl},
the demand that an almost total substructure of $\spc{B}$ be $\cl{K}$--epic
is characterized by the existence of finitely many suitable implications (of possibly infinite length),
each having only finitely many variables in the role of $\vec{z}$.
We say that $\cl{K}$ has the \emph{weak ES property}
if no $\spc{B}\in\cl{K}$ has a proper
$\cl{K}$--epic almost total substructure.
It is pointed out in \cite[p.\,76]{BH06}
that the
meaning of this demand
would not change if, in the definition of `almost total', we required $\left|Z\right|=1$.
\end{remark}

\section{$\mathfrak{m}$--Prevarieties}\label{sec:generalized quasivarieties}

From now on, $\mathfrak{m}$ shall denote a fixed but arbitrary \emph{infinite} cardinal.  As usual,
$\mathfrak{m}^+$ stands for the cardinal successor of $\mathfrak{m}$.

As was mentioned in Section~\ref{sec:atomic consequence}, our ability to axiomatize arbitrary prevarieties
using only \emph{sets} of variables depends on the set theory in which we work
\cite{Adamek1990}.  This justifies our interest in the
following classes.

\begin{definition}
\label{def:GQV}
An \emph{$\mathfrak{m}$--prevariety}
is a class of structures axiomatized by
implications,
each of which is formulated in at most $\mathfrak{m}$ variables.
\end{definition}

Suppose $\Xi$ is a set of implications axiomatizing $\cl{K}$, where $\mathit{var}(\Xi)\subseteq Y$ and
$\textup{$\left|Y\right|\leq\mathfrak{m}$}$.  Then the set of atomic formulas over $Y$ has cardinality at most
$\mathfrak{n}\seteq\mathfrak{m}+\mathfrak{s}$, where $\mathfrak{s}$ is the cardinality of the signature.  Therefore,
each of the implications in $\Xi$ has at most $\mathfrak{n}$ atomic subformulas.
From this it follows easily
that $\cl{K}$ is \emph{closed under $\mathfrak{n}^+$--reduced products},
i.e., for any subfamily $\{\spc{A}_i:i\in I\}$ of $\cl{K}$ and any $\mathfrak{n}^+$--complete filter
$D$ over $I$, the reduced product $\prod_{i\in I}\spc{A}_i/D$ belongs to $\cl{K}$.  The demand that $D$
be $\mathfrak{n}^+$--\emph{complete} means that, whenever $E\subseteq D$ and $\left|E\right|\leq\mathfrak{n}$,
then $\bigcap E\in D$.  In summary:

\begin{lemma}\label{reduced products}
\textup{(cf.\ \cite[Prop.~2.3.19]{Gor98})}
\,Each\/ $\mathfrak{m}$--prevariety
is closed under\/ $\text{$(\mathfrak{m}+\mathfrak{s})^+$}$--reduced
products, where\/ $\mathfrak{s}$ is the cardinality of the signature.
\end{lemma}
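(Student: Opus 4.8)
The plan is to show directly that if $\cl{K}$ is an $\mathfrak{m}$--generalized quasivariety, then it is closed under $\mathfrak{n}^+$--reduced products, where $\mathfrak{n}=\mathfrak{m}+\mathfrak{s}$; the lemma then follows since $(\mathfrak{m}+\mathfrak{s})^+=\mathfrak{n}^+$. Fix a set $\Xi$ of implications axiomatizing $\cl{K}$, with $\mathit{var}(\Xi)\subseteq Y$ and $\left|Y\right|\leq\mathfrak{m}$. Since the number of atomic formulas in variables from $Y$ is at most $\mathfrak{m}+\mathfrak{s}=\mathfrak{n}$ (there are at most $\mathfrak{m}$ terms built from $\mathfrak{m}$ variables and finitely many of the $\mathfrak{s}$ operation symbols, hence at most $\mathfrak{m}+\mathfrak{s}$ atomic formulas, each relation symbol of finite rank contributing $\mathfrak{m}^{<\omega}=\mathfrak{m}$ tuples), every implication $\Sigma/p$ in $\Xi$ has $\left|\Sigma\right|\leq\mathfrak{n}$.

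Now let $\{\spc{A}_i:i\in I\}$ be a subfamily of $\cl{K}$, let $D$ be an $\mathfrak{n}^+$--complete filter over $I$, and put $\spc{A}=\prod_{i\in I}\spc{A}_i/D$; I want to show $\spc{A}$ validates each implication $\Sigma(\vec{x})/p(\vec{x})$ in $\Xi$. Suppose $\vec{a}\in A$ with $\spc{A}\models\Sigma(\vec{a})$; lift $\vec{a}$ to a sequence $\vec{f}$ of elements of $\prod_{i\in I}A_i$. For each atomic formula $s\in\Sigma$, the fact $\spc{A}\models s(\vec{a})$ means the set $U_s\seteq\{i\in I:\spc{A}_i\models s(\vec{f}(i))\}$ belongs to $D$. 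Since $\left|\Sigma\right|\leq\mathfrak{n}$ and $D$ is $\mathfrak{n}^+$--complete, $U\seteq\bigcap_{s\in\Sigma}U_s\in D$, so in particular $U\neq\varnothing$. For every $i\in U$ we have $\spc{A}_i\models\Sigma(\vec{f}(i))$, and since $\spc{A}_i\in\cl{K}$ validates the implication $\Sigma/p$, we get $\spc{A}_i\models p(\vec{f}(i))$; thus $\{i\in I:\spc{A}_i\models p(\vec{f}(i))\}\supseteq U\in D$, which is exactly $\spc{A}\models p(\vec{a})$. Hence $\spc{A}$ validates $\Sigma/p$, and as $\Xi$ was arbitrary in this argument, $\spc{A}\in\cl{K}$.

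The only mildly delicate point is the cardinality bookkeeping in the first paragraph — verifying that at most $\mathfrak{m}$ variables and an $\mathfrak{s}$--sized signature yield at most $\mathfrak{n}$ atomic formulas, and hence that each $\Sigma$ is small enough to be captured by $\mathfrak{n}^+$--completeness — together with recalling the standard fact (the relevant analogue of the \L o\'s--type computation) that truth of an \emph{atomic} formula in a reduced product is governed by membership in $D$ of the corresponding index set, which requires no completeness at all for a single atomic formula and only $\mathfrak{n}^+$--completeness to handle the conjunction of $\leq\mathfrak{n}$ of them. Everything else is routine; the substantive content is entirely in matching the size of the premise sets of the axioms to the completeness degree of the filter. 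I do not expect any real obstacle, which is why the statement is flagged as a lemma and cross-referenced to \cite[Prop.~2.3.19]{Gor98}.
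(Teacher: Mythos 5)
Your proof is correct and follows essentially the same route as the paper, which likewise bounds the number of atomic formulas over the axiomatizing variables by $\mathfrak{n}=\mathfrak{m}+\mathfrak{s}$ and then treats the closure under $\mathfrak{n}^+$--reduced products as the standard argument you spell out, with $\mathfrak{n}^+$--completeness used only to intersect the $\leq\mathfrak{n}$ index sets arising from the premises of a single axiom. (The one blemish is the parenthetical claim that there are ``at most $\mathfrak{m}$ terms'': when $\mathfrak{s}>\mathfrak{m}$ there are $\mathfrak{m}+\mathfrak{s}$ of them, but this does not affect the bound of $\mathfrak{n}$ atomic formulas that your argument actually uses.)
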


\begin{theorem}
\label{thm:compactness}
Let\/ $\cl{K}$ be an\/ $\mathfrak{m}$--prevariety,
whose signature has cardinality $\mathfrak{s}$\textup{.} Then\/ $\,\models_{\cl{K}}$ is\/ $(\mathfrak{m}+\mathfrak{s})^+$--compact.
\end{theorem}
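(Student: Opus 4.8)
The plan is to prove the contrapositive: starting from an atomic implication $\Sigma \models_{\cl{K}} p$ that is \emph{not} a consequence of any subset of $\Sigma$ of size $\le \mathfrak{n}:=\mathfrak{m}+\mathfrak{s}$, I would build a single structure in $\cl{K}$ witnessing $\Sigma \not\models_{\cl{K}} p$, thereby contradicting the hypothesis. So suppose that for every $\Sigma' \subseteq \Sigma$ with $|\Sigma'| \le \mathfrak{n}$ we have $\Sigma' \not\models_{\cl{K}} p$. For each such $\Sigma'$, fix a structure $\spc{A}_{\Sigma'} \in \cl{K}$ and an assignment $\vec{a}_{\Sigma'} \in A_{\Sigma'}$ (i.e.\ a homomorphism $h_{\Sigma'}\colon \sbT(\mathit{var}(\Sigma \cup \{p\})) \to \sbA_{\Sigma'}$) such that $\spc{A}_{\Sigma'} \models \Sigma'(\vec{a}_{\Sigma'})$ but $\spc{A}_{\Sigma'} \not\models p(\vec{a}_{\Sigma'})$. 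The index set is $I := \{\Sigma' \subseteq \Sigma : |\Sigma'| \le \mathfrak{n}\}$, which is directed upward under inclusion since $\mathfrak{n}$ is infinite (a union of $\le\mathfrak{n}$-many sets each of size $\le\mathfrak{n}$ has size $\le\mathfrak{n}$).

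The next step is the standard reduced-product trick for compactness. For each $s \in \Sigma$, let $I_s := \{\Sigma' \in I : s \in \Sigma'\}$, and let $D$ be the filter over $I$ generated by the family $\{I_s : s \in \Sigma\}$ together with the filter of $\le\mathfrak{n}$-complements, i.e.\ by all sets of the form $\{\Sigma' \in I : E \subseteq \Sigma'\}$ for $E \in I$. These sets are closed under finite (indeed $\le\mathfrak{n}$-fold) intersection — $\{\Sigma':E_1\subseteq\Sigma'\}\cap\{\Sigma':E_2\subseteq\Sigma'\} = \{\Sigma':E_1\cup E_2\subseteq\Sigma'\}$, and $E_1\cup E_2 \in I$ — and none is empty, so $D$ is a proper filter. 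Crucially $D$ is $\mathfrak{n}^+$-complete on the generating sets: the intersection of $\le\mathfrak{n}$-many of them is again a basic set $\{\Sigma' : \bigcup E_i \subseteq \Sigma'\}$ with $\bigcup E_i \in I$. One then checks that the filter $D$ itself is $\mathfrak{n}^+$-complete (every member contains a basic set, and $\le\mathfrak{n}$-fold intersections of members contain the corresponding $\le\mathfrak{n}$-fold intersection of basic sets). Form the reduced product $\spc{A} := \prod_{\Sigma' \in I} \spc{A}_{\Sigma'}/D$. By Lemma~\ref{reduced products}, $\spc{A} \in \cl{K}$, since $\cl{K}$ is closed under $\mathfrak{n}^+$-reduced products.

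Finally I would verify that $\spc{A}$ refutes $\Sigma \models_{\cl{K}} p$. Let $\vec{a} := (h_{\Sigma'}[\vec{x}])_{\Sigma'\in I}/D$ be the tuple of $D$-classes of the chosen assignments, where $\vec{x}$ enumerates $\mathit{var}(\Sigma\cup\{p\})$. For each atomic formula $s \in \Sigma$: on the set $I_s \in D$ we have $s \in \Sigma'$, hence $\spc{A}_{\Sigma'} \models s(h_{\Sigma'}[\vec{x}])$; since atomic formulas (equations and relation atoms) are preserved to reduced products along any set in the filter — this is the easy half of Łoś's theorem, needing no completeness or ultrafilter, just that the defining set lies in $D$ — we get $\spc{A} \models s(\vec{a})$. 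Thus $\spc{A} \models \Sigma(\vec{a})$. On the other hand, $\spc{A} \not\models p(\vec{a})$: if $p$ is the equation $\varphi \approx \psi$, then $\spc{A} \models p(\vec{a})$ would mean $\{\Sigma' : \spc{A}_{\Sigma'} \models p(h_{\Sigma'}[\vec{x}])\} \in D$, but that set is disjoint from $I = \{\Sigma':\emptyset\subseteq\Sigma'\} \in D$'s full index — more precisely it is empty by our choice of the $\spc{A}_{\Sigma'}$, and $\emptyset \notin D$ since $D$ is proper; the relation-atom case is identical using that the preimage of a failing relation atom, being empty, cannot lie in $D$. Hence $\Sigma \not\models_{\cl{K}} p$, contradiction. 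The main obstacle — really the only delicate point — is getting the completeness bookkeeping exactly right: verifying that the filter $D$ (not merely its generating base) is $\mathfrak{n}^+$-complete so that Lemma~\ref{reduced products} applies, and confirming that the cardinality arithmetic makes $I$ directed and the base closed under $\le\mathfrak{n}$-fold intersection; everything else is the routine "easy direction" of Łoś.
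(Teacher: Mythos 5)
Your proposal is correct and follows essentially the same route as the paper: the same index set of $\le(\mathfrak{m}+\mathfrak{s})$-sized subsets of $\Sigma$, the same $(\mathfrak{m}+\mathfrak{s})^+$-complete filter generated by the sets $\{\Sigma' : E\subseteq\Sigma'\}$, the same appeal to closure under $(\mathfrak{m}+\mathfrak{s})^+$-reduced products (Lemma~\ref{reduced products}), and the same easy-direction-of-\L{}o\'s verification that the reduced product satisfies $\Sigma$ but not $p$. No gaps.
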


\begin{proof}
Let $\mathfrak{n}=\mathfrak{m}+\mathfrak{s}$ and suppose $\Sigma(\vec{x})\models_{\cl{K}}p(\vec{x})$,
where $\vec{x}=x_0,x_1,\dots$ is a sequence of (possibly more than $\mathfrak{n}$) variables.
Let $I=\{\Lambda\subseteq\Sigma:\left|\Lambda\right|\leq \mathfrak{n}\}$.
For each $\Lambda\in I$, let $I_\Lambda=\{\Gamma\in I:\Lambda\subseteq\Gamma\}$, so $I_\Lambda\neq\emptyset$
(as $\Lambda\in I_\Lambda$).  Define
\[
D=\{J\subseteq I:J\supseteq I_\Lambda\textup{ for some }\Lambda\in I\},
\]
so $\emptyset\notin D$, and $D$ is upward closed in the power set of $I$.
To see that $D$ is an $\mathfrak{n}^+$--complete filter over $I$, let $E\subseteq D$,
with $\left|E\right|\leq \mathfrak{n}$.  For each $J\in E$, choose $\Lambda_J\in I$ such that $J\supseteq I_{\Lambda_J}$.
Let $\Lambda=\bigcup_{J\in E}\Lambda_J$.  Because $\left|\Lambda_J\right|\leq \mathfrak{n}$ for all $J\in E$, we have
$\left|\Lambda\right|\leq\mathfrak{n}\bcdw\left|E\right|=\mathfrak{n}$, so $\Lambda\in I$.  Also,
$\bigcap E\supseteq\bigcap_{J\in E}I_{\Lambda_J}=I_\Lambda$, so $\bigcap E\in D$, as required.

Assume, with a view to contradiction, that for each $\Lambda\in I$, there exists $\spc{A}_\Lambda\in\cl{K}$
such that $\Lambda\not\models_{\{\spc{A}_\Lambda\}}p$, i.e., there exists
$\vec{a}_\Lambda=a^\Lambda_0,a^\Lambda_1,\ldots\in A_\Lambda$ such that $\spc{A}_\Lambda\models\Lambda(\vec{a}_\Lambda)$
but $\spc{A}_\Lambda\not\models p(\vec{a}_\Lambda)$.  Let $\spc{B}=\prod_{\Lambda\in I}\spc{A}_\Lambda$ and
$\spc{A}=\spc{B}/D$, so $\spc{A}\in\cl{K}$, by Lemma~\ref{reduced products}.  In particular, $\Sigma\models_{\{\spc{A}\}}p$.

Define $\vec{b}=b_0,b_1,\ldots\in B$
by $b_k(\Lambda)=a^\Lambda_k$, for each $k,\Lambda$.  If $s\in\Sigma$, then
\[
\{s\}\in I
\textup{ \ and \ }
I_{\{s\}}\subseteq \text{\textlbrackdbl$s(\vec{b})$\textrbrackdbl}\seteq
\{\Lambda\in I:\spc{A}_\Lambda\models s(b_0(\Lambda),b_1(\Lambda),\ldots)\},
\]
so $\text{\textlbrackdbl$s(\vec{b})$\textrbrackdbl}\in D$.
Thus, $\spc{A}\models\Sigma(\vec{b})$,
but $\text{\textlbrackdbl$p(\vec{b})$\textrbrackdbl}=\emptyset\notin D$, so $\spc{A}\not\models p(\vec{b})$.
This shows that $\Sigma\not\models_{\{\spc{A}\}}p$, a contradiction,
so $\Lambda\models_\cl{K}p$ for some $\Lambda\in I$.
\end{proof}

\section{ES Properties}\label{sec:ES properties}

A structure is said to be $\mathfrak{n}$--\emph{generated} (where $\mathfrak{n}$ is a cardinal)
if its algebra reduct has a generating subset with at most $\mathfrak{n}$ elements.
`Finitely generated' means $n$--generated for some $n\in\omega$.
(Recall that $\mathfrak{m}$ is infinite.)

\begin{theorem}
\label{thm:ESproperty}
Let\/ $\cl{K}$ be an\/ $\mathfrak{m}$--prevariety
whose signature has cardinality\/ $\mathfrak{s}$. Then\/ $\cl{K}$ has the ES property iff no structure in\/ $\cl{K}$ of cardinality at most\/ $\mathfrak{m}+\mathfrak{s}$
has a proper\/ $\cl{K}$--epic substructure.
\end{theorem}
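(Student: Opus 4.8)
The forward direction is trivial: if $\cl{K}$ has the ES property then certainly no structure in $\cl{K}$ (of any cardinality) has a proper $\cl{K}$--epic substructure. So the work is in the contrapositive of the converse: assuming $\cl{K}$ lacks the ES property, I would produce a structure $\spc{B}\in\cl{K}$ of cardinality at most $\mathfrak{n}\seteq\mathfrak{m}+\mathfrak{s}$ with a proper $\cl{K}$--epic substructure. By Remark~\ref{remk:almost total} (citing \cite[p.\,76]{BH06}), the failure of the ES property can be witnessed by the failure of the \emph{weak} ES property with $|Z|=1$: there exist $\spc{B}_0\in\cl{K}$, an element $b\in B_0$, and a proper substructure $\spc{A}_0$ with $\spc{B}_0=\Sg^{\spc{B}_0}(A_0\cup\{b\})$ such that $\spc{A}_0$ is $\cl{K}$--epic in $\spc{B}_0$ but $b\notin A_0$.

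**Extracting a small witness via Theorem~\ref{thm:campercholi-rtl}.** Apply Theorem~\ref{thm:campercholi-rtl} with $Z=\{b\}$: there is a set $\Sigma=\Sigma(\vec{x},v)$ of atomic formulas (here $\vec z$ is empty, since $Z\setminus\{b\}=\emptyset$) with $\spc{B}_0\models\Sigma(\vec{a},b)$ for suitable $\vec{a}\in A_0$, and
\[
\Sigma(\vec{x},v_1)\cup\Sigma(\vec{x},v_2)\models_\cl{K}v_1\approx v_2.
\]
Now invoke Theorem~\ref{thm:compactness}: since $\models_\cl{K}$ is $\mathfrak{n}^+$--compact, I can replace $\Sigma$ by a subset $\Sigma'\subseteq\Sigma$ with $|\Sigma'|\le\mathfrak{n}$ such that $\Sigma'(\vec x,v_1)\cup\Sigma'(\vec x,v_2)\models_\cl K v_1\approx v_2$ still holds. (One must check that the compactness is applied to the \emph{pair}-variable version: treat $\Sigma(\vec x,v_1)\cup\Sigma(\vec x,v_2)$ as a single set of atomic formulas indexed by $\Sigma$, take a small subset forcing $v_1\approx v_2$, and let $\Sigma'$ be its projection — this is routine.) Then $\mathit{var}(\Sigma')$ has size at most $\mathfrak{n}$, so only at most $\mathfrak n$ of the variables in $\vec x$ actually occur.

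**Building the small structure.** Let $\vec{a}'\in A_0$ be the (sub)tuple of $\vec a$ corresponding to the variables actually occurring in $\Sigma'$, so $|\vec a'|\le\mathfrak n$. Put $\spc{B}\seteq\Sg^{\spc{B}_0}(\{\,a'\ :\ a'\in\vec a'\,\}\cup\{b\})$, a substructure of $\spc{B}_0$ generated by at most $\mathfrak n+1=\mathfrak n$ elements; since the signature has cardinality $\mathfrak s\le\mathfrak n$, the term algebra on $\mathfrak n$ generators has at most $\mathfrak n$ elements, so $|B|\le\mathfrak{n}=\mathfrak m+\mathfrak s$. As $\cl{K}$ is closed under $\Sop$, $\spc{B}\in\cl{K}$. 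Let $\spc{A}\seteq\Sg^{\spc{B}}(\{\,a'\ :\ a'\in\vec a'\,\})$, a substructure of $\spc{B}$. It remains to verify: (i) $\spc{A}$ is a \emph{proper} substructure of $\spc{B}$, i.e.\ $b\notin A$; and (ii) $\spc{A}$ is $\cl{K}$--epic in $\spc{B}$. For (ii), note $\spc{B}=\Sg^{\spc{B}}(A\cup\{b\})$, $\spc{B}\models\Sigma'(\vec a',b)$ with $\vec a'\in A$, and $\Sigma'(\vec x,v_1)\cup\Sigma'(\vec x,v_2)\models_\cl K v_1\approx v_2$ — so Theorem~\ref{thm:campercholi-rtl}\,(\ref{camp2})$\Rightarrow$(\ref{camp1}) gives exactly that $\spc{A}$ is $\cl{K}$--epic in $\spc{B}$.

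**The main obstacle.** The delicate point is (i), showing $b\notin A$ — the shrinking process must not accidentally drag $b$ into the subalgebra generated by $\vec a'$. This is where I expect the real argument to lie. The safe route is \emph{not} to shrink the generating set of $\spc{A}$ at all but only to choose $\spc{B}$ carefully. Concretely: take $\spc{A}$ to be the substructure of $\spc B_0$ \emph{generated by all of $\vec a$ that are needed}, but observe that whether $b\in\Sg^{\spc B_0}(S)$ for $S\subseteq A_0$ is a property of a finitely (indeed $\le\mathfrak n$) generated subalgebra — so by a Löwenheim–Skolem-style downward argument one enlarges the generating set $\vec a'$ by at most $\aleph_0\le\mathfrak n$ further elements of $A_0$ witnessing, for each term $t$ with $t^{\spc B_0}(\ldots)$ possibly equal to $b$, that in fact it is \emph{not} equal to $b$ in $\spc B_0$ — but this is automatic since $b\notin A_0\supseteq \Sg^{\spc B_0}(A_0)$. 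The cleanest formulation: set $\spc A\seteq\Sg^{\spc B}(\{a' : a'\in\vec a'\})$; then $A\subseteq A_0$ because $\{a':a'\in\vec a'\}\subseteq A_0$ and $A_0$ is closed under the operations, and $b\notin A_0$, so $b\notin A$ and $\spc A\subsetneq\spc B$. Thus $\spc{B}$ is the desired small structure with a proper $\cl{K}$--epic substructure, completing the proof.
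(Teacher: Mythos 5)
Your forward direction and your ``shrink $\Sigma$ by compactness, then generate a small substructure'' idea are both reasonable, but the opening reduction is false, and it is precisely the crux of the theorem. You claim that if $\cl{K}$ lacks the ES property then there is a witness $\spc{A}_0\subsetneq\spc{B}_0$ with $\spc{B}_0=\Sg^{\spc{B}_0}(A_0\cup\{b\})$ for a single element $b$. That is the failure of the \emph{weak} ES property, which is a strictly stronger assertion than the failure of the ES property. Remark~\ref{remk:almost total} only says that, \emph{within the definition of the weak ES property}, one may take $\left|Z\right|=1$ without changing its meaning; it does not say that a failure of ES yields an almost total witness. The paper records a counterexample immediately after Corollary~\ref{thm:ESpropertyQV}: a locally finite variety of Brouwerian algebras lacks the ES property yet has the weak ES property, so \emph{no} member of it has a proper epic almost total substructure. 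Concretely, your reduction fails because the witnesses $\vec{c}\in B\bs A$ that Theorem~\ref{thm:campercholi-rtl} supplies for $b$ need not lie in $\Sg^{\spc{B}}(A\cup\{b\})$, so epicness of $\spc{A}$ is not inherited by that substructure. (The point you flag as ``the main obstacle'', namely $b\notin A$, is in fact the easy part; the genuine difficulty is keeping enough witnesses inside the small structure for \emph{every} element outside $A$.)

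The paper's proof handles this with a recursion of length $\omega$: start with $\spc{C}_0=\Sg^{\spc{B}}\{a,b\}$, and at each stage, for every $c\in C_i\bs A$, adjoin the (at most $\mathfrak{m}+\mathfrak{s}$ many, by Theorem~\ref{thm:compactness}) witnesses $\vec{a}_c\in A$ and $\vec{d}_c\in B\bs A$ supplied by Theorem~\ref{thm:campercholi-rtl}. The new elements in $\vec{d}_c$ lie outside $A$ and therefore require witnesses of their own at the next stage, which is why the process must be iterated. The union $\spc{C}=\bigcup_{i\in\omega}\spc{C}_i$ still has at most $\mathfrak{m}+\mathfrak{s}$ elements, and $D\seteq C\cap A$ carries a proper $\cl{K}$--epic substructure of $\spc{C}$, because each $c\in C\bs D$ has all of its witnesses inside $\spc{C}$ with $\vec{a}_c\in D$. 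Your compactness step and cardinality bookkeeping would slot into that recursion essentially unchanged, but without the iteration the argument does not go through.
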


\begin{proof}
Again, let $\mathfrak{n}=\mathfrak{m} + \mathfrak{s}$.
Suppose that $\cl{K}$ lacks the ES property, i.e., some $\spc{B} \in \cl{K}$ has a proper $\cl{K}$--epic substructure $\spc{A}$.
We must show that some
$\spc{C}\in\cl{K}$, with $\left|C\right|\leq\mathfrak{n}$, has a proper $\cl{K}$--epic substructure.

We shall define, recursively,
a denumerable sequence $\spc{C}_0,\spc{C}_1,\spc{C}_2,\dots$
of substructures of $\spc{B}$, where
$\spc{C}_i\in\mathbb{S}(
\spc{C}_{i+1})$ for each $i\in\omega$.

First, pick $b \in B \ld A$ and $a \in A$, and define $\spc{C}_0 = \Sg^{\spc{B}}\{a,b\}$, so $C_0\not\subseteq A$.

Now assume that $\spc{C}_i\in\mathbb{S}(\spc{B})$ has been defined, where $i\in\omega$, and that $C_i \not\subseteq A$.
Choose
$c \in C_i \ld A$.  As $c \in B \ld A$, Theorem~\ref{thm:campercholi-rtl}
shows that there exist a set of atomic formulas
$\Sigma(\vec{x},\vec{z},v)$
and elements $\vec{a}_c \in A$ and $\vec{d}_c\in B\ld A$, such that
$\spc{B} \models \Sigma(\vec{a}_c,\vec{d}_c,c)$ and
$\Sigma(\vec{x},\vec{z},v_1)
\cup
\Sigma(\vec{x},\vec{y},v_2)
\models_\cl{K}
v_1 \approx v_2$.
Moreover, $\,\models_{\cl{K}}$ is $\mathfrak{n}^+$--compact, by
Theorem~\ref{thm:compactness},
so we may assume that $\left|\Sigma\right|\leq\mathfrak{n}$, and hence
that
$\left|\mathit{var}(\Sigma)\right|\leq \mathfrak{n}$.
Consequently,
$|\vec{a}_c|,|\vec{d}_c| \leq \mathfrak{n}$.
Let $W_i$ be the union of (the ranges of) all the sequences $\vec{a}_c$ and
$\vec{d}_c$
such that $c\in C_i\ld A$, so
\begin{equation}\label{eq:wn}
\left|W_i\right|\leq
\mathfrak{n}\bcdw\left|C_i\right|.
\end{equation}
Define
$ \spc{C}_{i+1} = \Sg^{\spc{B}}(C_i \cup W_i)$.

Let $\spc{C}\in\mathbb{S}(\spc{B})$ be the (directed) union $\bigcup_{i\in\omega}\spc{C}_i$, so $\spc{C}\in\cl{K}$.
Now $D\seteq C\cap A$ is not empty,
as it includes $a$, so $D$
is the universe of a substructure $\spc{D}$ of $\spc{B}$.
Also, $\spc{D}$ is a proper substructure of $\spc{C}$,
as $b \in C \ld D$.

To see
that $\spc{D}$ is $\cl{K}$--epic in $\spc{C}$, let $c \in C \ld D$.
Then $c \in C_i \ld A$ for some $i \in \omega$.
Pick $\Sigma$, $\vec{a}_c$ and $\vec{d}_c$ as in the inductive step.
Because the substructure $\spc{C}_{i+1}$ of $\spc{C}$ includes $\vec{a}_c,\vec{d}_c,c$
and satisfies $\Sigma(\vec{a}_c,\vec{d}_c,c)$, the same is true of $\spc{C}$.
So, noting that $\vec{a}_c \in D$, we infer from
Theorem~\ref{thm:campercholi-rtl} that $\spc{D}$ is $\cl{K}$--epic in $\spc{C}$.

As $\mathfrak{s}+\aleph_0\leq\mathfrak{n}$,
the union $\spc{C}$ of the
family $\{\spc{C}_i:i\in\omega\}$ will have at most $\mathfrak{n}$ elements
if
every $\spc{C}_i$ is $\mathfrak{n}$--generated, which we
verify by induction.  Indeed,
$\spc{C}_0$ is $2$--generated, and if some $\spc{C}_i$ is $\mathfrak{n}$--generated, then so is
$\spc{C}_{i+1}$,
by (\ref{eq:wn}).
\end{proof}

The corollary below is due to Bacsich \cite{Bac74}.  (For \emph{varieties} of algebras, it
follows from an
earlier finding of Isbell \cite[Cor.~1.3]{Isb66}.)
\begin{corollary}\label{thm:ESpropertyQV}
\textup{(\cite[Thm.~2]{Bac74})}
Let\/ $\cl{K}$ be a quasivariety with a countable signature.  Then\/ $\cl{K}$ has the ES property if and only if no
countable
member of\/ $\cl{K}$ has a proper\/ $\cl{K}$--epic substructure.
\end{corollary}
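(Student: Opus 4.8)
The plan is to obtain this as an immediate specialization of Theorem~\ref{thm:ESproperty}, with the parameters $\mathfrak{m}$ and $\mathfrak{s}$ chosen appropriately. First I would observe that a quasivariety is automatically an $\aleph_0$--generalized quasivariety in the sense of Definition~\ref{def:GQV}. Indeed, by the facts recalled in Section~\ref{sec:atomic consequence}, a quasivariety $\cl{K}$ is axiomatized by a set of quasi-atomic formulas $\Sigma/p$, in each of which $\Sigma$ is \emph{finite}. Since every term is a finite string of symbols, each atomic formula contains only finitely many variables, so each set $\Sigma\cup\{p\}$ arising as an axiom involves only finitely many variables; in particular, every axiom is formulated in at most $\aleph_0$ variables. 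Hence we may take $\mathfrak{m}=\aleph_0$.

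Next, since $\cl{K}$ has a countable signature, the cardinality $\mathfrak{s}$ of its signature satisfies $\mathfrak{s}\leq\aleph_0$, and therefore $\mathfrak{m}+\mathfrak{s}=\aleph_0$. Applying Theorem~\ref{thm:ESproperty} to $\cl{K}$ with these values of $\mathfrak{m}$ and $\mathfrak{s}$, we conclude that $\cl{K}$ has the ES property if and only if no structure in $\cl{K}$ of cardinality at most $\aleph_0$ has a proper $\cl{K}$--epic substructure. Since ``cardinality at most $\aleph_0$'' is precisely ``countable'' (finite structures included, matching the statement of the corollary), this is exactly the desired equivalence.

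The only step that requires a moment's attention is the verification that a quasivariety genuinely qualifies as an $\aleph_0$--generalized quasivariety; here one uses that the defining axioms of a quasivariety are quasi-atomic—so that each premise set $\Sigma$ is finite—together with the finiteness of terms, rather than invoking arbitrary implications of possibly infinite length. Everything else is a routine instantiation of Theorem~\ref{thm:ESproperty}, so I do not expect any real obstacle.
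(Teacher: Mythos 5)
Your proposal is correct and follows exactly the paper's own route: a quasivariety is axiomatized by a set of quasi-atomic formulas, each involving only finitely many variables, so it is an $\aleph_0$--generalized quasivariety, and the result is then the instance $\mathfrak{m}=\aleph_0$, $\mathfrak{s}\leq\aleph_0$ of Theorem~\ref{thm:ESproperty}. No issues.
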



\begin{proof}
Since $\cl{K}$ can be axiomatized by a set of finite implications, it is an $\aleph_0$--prevariety,
and the result follows from
Theorem~\ref{thm:ESproperty}.
\end{proof}


\begin{remark}\label{remk:no improvement}
An $\aleph_0$--prevariety
need not be a quasivariety, even if it has a variable-free axiomatization: see
\cite[p.\,45]{Adamek1990}.
Nevertheless, in
Corollary~\ref{thm:ESpropertyQV}, we cannot strengthen `countable' to `finitely generated'.
Indeed, a locally finite variety $\cl{K}$ of Brouwerian algebras and a proper $\cl{K}$--epic
subalgebra
of a denumerable member of $\cl{K}$ are exhibited in \cite[Sec.~6]{BMR17}, but
no finitely generated (i.e., finite) member of $\cl{K}$ has a proper $\cl{K}$--epic
subalgebra, because every variety of Brouwerian algebras has the \emph{weak} ES property (see Remark~\ref{remk:almost total}, \cite[Thm.~3.14]{BH06}
and
\cite{Kre60}).
\end{remark}
On the other hand, finitely generated structures do suffice, in quasivarieties, to test the weak ES property itself:

\begin{theorem}
\label{thm:weakES}
A quasivariety\/ $\cl{K}$ has the weak ES property iff no finitely generated member
of\/ $\cl{K}$ has a proper\/ $\cl{K}$--epic substructure.
\end{theorem}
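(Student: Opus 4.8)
The plan is to prove both directions, with the left-to-right implication being trivial and the substance lying in the converse. So suppose $\cl{K}$ fails the weak ES property; I must produce a \emph{finitely generated} member of $\cl{K}$ with a proper $\cl{K}$--epic substructure. By Remark~\ref{remk:almost total}, failure of the weak ES property gives some $\spc{B}\in\cl{K}$ with a proper $\cl{K}$--epic almost total substructure, and moreover we may take the witnessing set $Z$ to have $\left|Z\right|=1$, say $Z=\{b\}$ with $\spc{B}=\Sg^\spc{B}(A\cup\{b\})$ and $b\notin A$. Now apply Theorem~\ref{thm:campercholi-rtl}: there is a set $\Sigma=\Sigma(\vec{x},\vec{z},v)$ of atomic formulas with $\spc{B}\models\Sigma(\vec{a},\vec{c},b)$ for suitable $\vec{a}\in A$, $\vec{c}\in B$ (indeed, since $B=\Sg^\spc{B}(A\cup\{b\})$, we may take $\vec{c}$ to have length $\le\left|Z\right|=1$, i.e.\ $\vec{c}$ is empty or equal to $b$), and such that the key implication $\Sigma(\vec{x},\vec{z},v_1)\cup\Sigma(\vec{x},\vec{y},v_2)\models_\cl{K}v_1\approx v_2$ holds.

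The crucial extra ingredient for quasivarieties is that $\models_\cl{K}$ is finitary (as noted right after Definition~\ref{def:kappa-compact}, since $\cl{K}$ is closed under $\Puop$), equivalently by Remark~\ref{remk:finitarity} we may take $\Sigma$ \emph{finite}. Hence $\mathit{var}(\Sigma)$ is finite, so $\vec{a}$ has finite length; let $a_0,\dots,a_{n-1}\in A$ be its entries. Now set $\spc{C}=\Sg^\spc{B}(\{a_0,\dots,a_{n-1},b\})$, a finitely generated substructure of $\spc{B}$, hence $\spc{C}\in\cl{K}$. Put $\spc{D}=\Sg^\spc{C}(\{a_0,\dots,a_{n-1}\})$ — or more precisely take $\spc{D}$ to be the substructure of $\spc{C}$ with universe $C\cap A$, which is nonempty (it contains the $a_i$) and is a \emph{proper} substructure of $\spc{C}$ since $b\in C\setminus(C\cap A)$. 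Then $\spc{D}$ is finitely generated too. It remains to check that $\spc{D}$ is $\cl{K}$--epic in $\spc{C}$, and this follows from Theorem~\ref{thm:campercholi-rtl} applied with $Z=\{b\}\subseteq C\setminus D$: we have $\spc{B}=\Sg^\spc{B}(A\cup\{b\})$, so $\spc{C}=\Sg^\spc{C}(D\cup\{b\})$; the formula $\Sigma$ witnesses condition~(\ref{camp2}) for $\spc{C}$ because $\spc{C}$ contains all the parameters $\vec{a},\vec{c},b$ and inherits $\spc{C}\models\Sigma(\vec{a},\vec{c},b)$ from $\spc{B}$ (atomic formulas are absolute downward to substructures), while the implication~(\ref{eq:camp}) is a property of $\cl{K}$ itself and so is unchanged. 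Hence $\spc{D}$ is $\cl{K}$--epic in $\spc{C}$, as required.

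One point I will state carefully: when I invoke Theorem~\ref{thm:campercholi-rtl} in the direction (\ref{camp2})$\Rightarrow$(\ref{camp1}) for $\spc{C}$, I need the parameters $\vec{a},\vec{c}$ to lie in $D$ and $C$ respectively — here $\vec{a}\in A\cap C=D$ automatically, and $\vec{c}$ (of length $\le 1$) is either empty or is $b\in C$; either way the hypotheses of the theorem are met. I also use implicitly that $\spc{D}$ really is the image of an epimorphism, but since $\cl{K}$ is closed under $\mathbb{S}$, the remark following the definition of `epic substructure' tells us that $\spc{D}$ being $\cl{K}$--epic in $\spc{C}$ is precisely the statement that the inclusion $\spc{D}\mrig\spc{C}$ is a non-surjective $\cl{K}$--epimorphism. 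I do not expect any serious obstacle: the entire argument is a finitary refinement of the proof of Theorem~\ref{thm:ESproperty}, with the transfinite recursion collapsed to a single step because finiteness of $\Sigma$ means no new generators need ever be added. The one place demanding a little care is the bookkeeping around the length of $\vec{c}$ and the reduction $\left|Z\right|=1$, which is exactly what Remark~\ref{remk:almost total} supplies.
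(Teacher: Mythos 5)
Your proof is correct and follows essentially the same route as the paper's: use finitarity of $\models_{\cl{K}}$ (Remark~\ref{remk:finitarity}) to get a finite $\Sigma$ with finitely many parameters, generate a finitely generated substructure of $\spc{B}$ containing them, and reapply Theorem~\ref{thm:campercholi-rtl} in the other direction. The only difference is that you first reduce to $\left|Z\right|=1$ via the observation in Remark~\ref{remk:almost total} (imported from \cite{BH06}), whereas the paper handles all of the finite set $Z$ simultaneously by taking $Y$ to be the union of the parameter sequences $\vec{a}_b$ over $b\in Z$, which avoids relying on that reduction.
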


\begin{proof}
Suppose that
some $\spc{B} \in \cl{K}$ has a proper $\cl{K}$--epic almost total substructure $\spc{A}$.
So, $\spc{B} = \Sg^\spc{B}(A\cup Z)$ for some finite non-empty set $Z \subseteq B \ld A$.
Let $b\in Z$.  Because $\cl{K}$ is a quasivariety, Remark~\ref{remk:finitarity}
shows that there exist a \emph{finite} set of atomic formulas $\Sigma_b(\vec{x},\vec{z},v)$ and elements
$\vec{a}_b \in A$ and $\vec{d}_b\in Z$ such that $\spc{B} \models \Sigma_b(\vec{a}_b,\vec{d}_b,b)$ and
$\Sigma_b(\vec{x},\vec{z},v_1) \cup \Sigma_b(\vec{x},\vec{y},v_2) \models_\cl{K} v_1 \approx v_2$.
As $\mathit{var}(\Sigma_b)$ is finite, the sequences $\vec{a}_b$ and $\vec{d}_b$ may be chosen finite.

Let $Y$ be the union of (the ranges of) all the sequences $\vec{a}_b$ such that $b\in Z$, so $Y$ is finite.
Let $\spc{A}' = \Sg^\spc{A} Y$ and $\spc{B}' = \Sg^\spc{B}(Y\cup Z)$.  Then
$\spc{A}'$ is a proper (almost
total) substructure of the finitely generated structure $\spc{B}'\in\cl{K}$.
For all $b\in Z$, we have
$\vec{a}_b,\vec{d}_b,b\in B'$ and $\spc{B}'\in\mathbb{S}(\spc{B})$, so
$\spc{B}'\models\Sigma_b(\vec{a}_b,\vec{d}_b,b)$.  Then, since $\vec{a}_b\in A$ for each $b\in Z$,
Theorem~\ref{thm:campercholi-rtl} shows that $\spc{A}'$ is $\cl{K}$--epic in $\spc{B}'$.
\end{proof}

\section{Equivalential and Algebraizable Logics}\label{sec:equivalential and algebraizable logics}

For a class $C$, we use $\mathcal{P}(C)$ to denote the class of all sub\emph{sets} of $C$.
\begin{definition}\label{def:logic set}
In a given algebraic signature, a \emph{deductive system} (briefly, a \emph{logic}) \emph{over} a set $X$
is a relation $\textup{$\,\vdash$}\subseteq\mathcal{P}(T(X))\times T(X)$ satisfying the demands below,
whenever $\Gamma\cup\Psi\cup\{\varphi\}\subseteq T(X)$:
\begin{enumerate}
\item\label{reflexivity}
$\Gamma\vdash\varphi$ for all $\varphi\in\Gamma$;
\item\label{transitivity}
if $\Gamma\vdash\psi$ for all $\psi\in\Psi$, and $\Psi\vdash\varphi$, then $\Gamma\vdash\varphi$;
\item\label{substitution invariance}
if $\Gamma\vdash\varphi$, then $h[\Gamma]\vdash h(\varphi)$ for all endomorphisms $h$ of $\sbT(X)$.
\setcounter{newexmp}{\value{enumi}}
\end{enumerate}
\end{definition}
Item (\ref{substitution invariance}) is called \emph{substitution-invariance}.
In this context, operation symbols
and elements of $\mathcal{P}(T(X))\times T(X)$
are usually called `connectives'
and `rules',
respectively.\,\footnote{\,Terms in $T(X)$ correspond intuitively to assertions (as in Boolean algebra), but
we resist the temptation to call them
`formulas', so as to prevent confusion with the first-order (e.g., atomic) formulas in the richer language
of the class $\cl{Mod}^*(\vdash)$, defined below.}

As the set of logics over $X$ is closed under arbitrary intersections, any subset $\Xi$
of $\mathcal{P}(T(X))\times T(X)$ generates such a logic (which is then also said to be \emph{axiomatized}
by $\Xi$).  A pair $\Gamma/\varphi$ in $\mathcal{P}(T(X))\times T(X)$ belongs to that logic iff $\varphi$
terminates
some (possibly infinite) sequence,
each item of which belongs to $\Gamma$ or is
$h(\psi)$ for some endomorphism $h$ of $\sbT(X)$ and some pair $\Psi/\psi$ from $\Xi$, where $h[\Psi]$
consists of previous items of the sequence.
(Here, $\Psi$ may be empty.)
This observation goes back, in principle, to \cite{LS58}.
\begin{definition}\label{def:logic class}
In a given algebraic signature,
a \emph{logic over} the proper class $\mathit{Var}$ is a family
$\textup{$\,\vdash$}=\{\vdash^X:X\in\mathcal{P}(\mathit{Var})\}$, where each $\,\vdash^X$ is a logic over $X$
(called a \emph{slice} of $\,\vdash$) and the following variant of (\ref{substitution invariance}) (called \emph{strong substitution-invariance}) holds:
\begin{enumerate}
\setcounter{enumi}{\value{newexmp}}
\item\label{substitution invariance2}
for any $X,Y\in\mathcal{P}(\mathit{Var})$,
if $\Gamma\textup{$\,\vdash^X$}\varphi$, then $h[\Gamma]\textup{$\,\vdash^Y$} h(\varphi)$ for all homomorphisms $h\colon\sbT(X)\mrig\sbT(Y)$.
\end{enumerate}
In this case, the notation $\Gamma\vdash\varphi$ signifies that $\Gamma\textup{$\,\vdash^X$}\varphi$ for some $X\in\mathcal{P}(\mathit{Var})$,
whence $\mathit{var}(\Gamma\cup\{\varphi\})\subseteq X$.  (Conversely, if
$\mathit{var}(\Gamma\cup\{\varphi\})\subseteq X\in\mathcal{P}(\mathit{Var})$ and $\Gamma\vdash\varphi$, then $\Gamma\textup{$\,\vdash^X$}\varphi$, by (\ref{substitution invariance2}).)
\end{definition}
Every logic $\,\vdash^*$ over a subset $X$ of the proper class $\mathit{Var}$ may be viewed as the $X$--slice of a logic $\,\vdash$ over
all of $\mathit{Var}$.  One such $\,\vdash$, which we label as \emph{induced by} $\,\vdash^*$, is defined by requiring that each of its
slices $\,\vdash^Y$ be
the logic over $Y$ generated by the set of all pairs $h[\Gamma]/h(\varphi)$ such
that $\Gamma\textup{$\,\vdash^*$}\varphi$ and $h\colon\sbT(X)\mrig\sbT(Y)$ is a homomorphism.  (This $\,\vdash$ satisfies
(\ref{substitution invariance2}),
by the syntactic characterization of $\,\vdash^Y$ preceding Definition~\ref{def:logic class}.)

Henceforth,
$\,\vdash$ is assumed to be a logic either over $\mathit{Var}$ or over an infinite subset of $\mathit{Var}$.
Note that
it is only for \emph{sets} $\Gamma\cup\{\varphi\}$ that the notation
$\Gamma\vdash\varphi$ is defined.
All claims in the present section
can be found in standard texts on abstract algebraic logic, e.g.,
\cite{BP89,Cze01,FJP03} and the recent \cite{Fon16}.  Their proofs are not affected by the extent
of the class of variables.  In fact, the class-versus-set distinction will be unimportant,
except in connection with Definitions~\ref{beth property classes} and \ref{beth property sets}
of Section~\ref{sec:beth definability properties}.

If $\Gamma\vdash\varphi$, then the pair $\Gamma/\varphi$ is called a \emph{derivable rule} of $\,\vdash$.
The expression
$\Gamma\vdash\Psi$ abbreviates `$\Gamma\vdash\xi$ for all $\xi\in\Psi$', while $\Gamma\dashv\vdash\Psi$
means `$\Gamma\vdash\Psi$ and $\Psi\vdash\Gamma$', and $\vdash\Psi$ stands for $\emptyset\vdash\Psi$.
(The same conventions will apply to relations of the form $\,\models_\cl{K}$ below.)

A ($\,\vdash$\,--)\,\emph{matrix} $\langle\sbA,F\rangle$
comprises an algebra $\sbA$ in the signature of $\,\vdash$ and a set $F\subseteq A$.
We regard it as a structure $\spc{A}$ for the signature whose operation symbols are the connectives
of $\,\vdash$ and whose sole relation symbol $r$ is unary, so that $\spc{A}\models r(a)$ iff
$a\in F$.  Intuitively, $r$ is a `truth' predicate.  The substructures $\langle\sbB,B\cap F\rangle$ ($\sbB\in\Sop(\sbA)$) of $\spc{A}$
are usually called \emph{submatrices}.  Similarly, \emph{matrix homomorphisms} are the homomorphisms
between matrices, considered as algebras with a distinguished unary relation.

Given a class $\cl{M}$ of
matrices, we abbreviate
$\{r(\gamma):\gamma\in\Gamma\}\models_\cl{M}r(\varphi)$ as
$\Gamma\models_\cl{M}\varphi$.  When this is true,
the rule
$\Gamma/\varphi$ is
said to be
\emph{validated by} (each member of) $\cl{M}$.
Abusing notation, we also use $\Gamma\models_\cl{M}\varphi\approx\psi$ to
abbreviate $\{r(\gamma):\gamma\in\Gamma\}\models_\cl{M}\varphi\approx\psi$
(where $\Gamma$ still consists of terms, not equations).

A matrix
$\langle\sbA,F\rangle$ is called a \emph{model of} $\,\vdash$ if it validates all the derivable rules of $\,\vdash$,
in which case $F$ is called a $\,\vdash$--\emph{filter} of $\sbA$.  The set $\mathit{Fi}_\vdash\sbA$ of all
$\,\vdash$--filters of $\sbA$ is closed under arbitrary intersections and is therefore the universe of a
complete lattice $\boldsymbol{\mathit{Fi}}_\vdash\sbA$, ordered by inclusion.

Given a matrix $\langle\sbA,F\rangle$, we denote by $\leibniz^\sbA F$ the largest
congruence $\theta$ of $\sbA$ for which $F$ is a union of $\theta$--classes (i.e., for which $b\in F$ whenever
both $\langle a,b\rangle\in\theta$ and $a\in F$).  This congruence always exists.  If $h\colon\sbB\mrig\sbA$ is a
homomorphism and $F$ is a $\,\vdash$--filter of $\sbA$, then $h^{-1}[F]\seteq\{b\in B:h(b)\in F\}$ is a
$\,\vdash$--filter of $\sbB$ and
\[
\leibniz^\sbB h^{-1}[F]\supseteq h^{-1}[\leibniz^\sbA F]\seteq\{\langle b,b'\rangle\in B^2:\langle h(b),h(b')\rangle
\in\leibniz^\sbA F\}.
\]
If, moreover, $h$ is surjective, then
\begin{equation}\label{inverse homs}
h^{-1}[\leibniz^\sbA F]\,=\,\leibniz^\sbB h^{-1}[F].
\end{equation}

The maps $F\mapsto\leibniz^\sbA F$ ($F\in \mathit{Fi}_\vdash\sbA$), taken over all algebras $\sbA$, constitute
the \emph{Leibniz operator} of $\,\vdash$.  This operator is not always isotone, i.e., from
$F,G\in \mathit{Fi}_\vdash\sbA$ and $F\subseteq G$, it need not follow that $\leibniz^\sbA F\subseteq\leibniz^\sbA G$.

A matrix $\langle\sbA,F\rangle$ is said to be
\emph{reduced} if $\leibniz^\sbA F=\textup{id}_A\seteq\{\langle a,a\rangle:a\in A\}$.
The derivable rules of $\,\vdash$ are exactly the pairs
$\Gamma/\varphi$ validated by the class $\cl{Mod}^*(\vdash)$ of all reduced matrix models of $\,\vdash$
(i.e., $\Gamma\vdash\varphi$ iff ${\Gamma\models_{\cl{Mod}^*(\vdash)}\varphi}$).
We treat $\cl{Mod}^*(\vdash)$ as a concrete category, equipped with
all matrix homo\-morphisms between
its members.

\begin{theorem}\label{thm:equivalential}
The following conditions on\/ $\,\vdash$ are
equivalent.
\begin{enumerate}
\item\label{equivalential 0}
$\cl{Mod}^*(\vdash)$ is a prevariety.

\item\label{equivalential 1}
The Leibniz operator of\/ $\,\vdash$ is isotone (for all algebras) and\/ \textup{(\ref{inverse homs})} holds for\/ \emph{all}
homomorphisms $h\colon\sbB\mrig\sbA$ and all\/ $\,\vdash$--filters $F$ of\/ $\sbA$\textup{.}

\item\label{equivalential 2}
There exists a set $\Delta$ of binary terms such that, for any matrix model\/ $\langle\sbA,F\rangle$
of\/ $\,\vdash$\textup{,}
we have\/ $\leibniz^\sbA F=\{\langle a,b\rangle\in A^2:\Delta^\sbA(a,b)\subseteq F\}$\textup{.}
\item\label{equivalential 3}
There exists a set $\Delta$ of binary terms such that
\begin{align*}
& \quad\quad\quad \vdash\Delta(x,x);\\
& \quad\quad\quad \{x\}\cup\Delta(x,y)\vdash y;\\
&
\quad\quad\quad \Delta(x_1,y_1)\cup\,\dots\,\cup\Delta(x_n,y_n)
\vdash\Delta(\varphi(x_1,\dots,x_n),\varphi(y_1,\dots,y_n)),
\end{align*}
for every connective\/ $\varphi$ of\/ $\,\vdash$\textup{,} where $n$ is the rank of\/ $\varphi$\textup{.}
\end{enumerate}
A set\/ $\Delta$ of binary terms witnesses\/ \textup{(\ref{equivalential 2})} iff it witnesses\/
\textup{(\ref{equivalential 3}).}  In that case, the third
demand in\/ \textup{(\ref{equivalential 3})}
generalizes from connectives $\varphi$ to arbitrary terms.
\end{theorem}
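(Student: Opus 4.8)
The theorem is a standard characterization of equivalential logics, and I would prove it by establishing a cycle of implications together with the final claim about the interchangeability of the witnessing sets $\Delta$. The natural route is $(\ref{equivalential 3})\Rightarrow(\ref{equivalential 2})\Rightarrow(\ref{equivalential 1})\Rightarrow(\ref{equivalential 0})$, and then close the loop with $(\ref{equivalential 0})\Rightarrow(\ref{equivalential 3})$ (or $(\ref{equivalential 0})\Rightarrow(\ref{equivalential 1})$, whichever is smoother), while the assertion ``$\Delta$ witnesses (\ref{equivalential 2}) iff it witnesses (\ref{equivalential 3})'' is extracted from the $(\ref{equivalential 3})\Leftrightarrow(\ref{equivalential 2})$ steps, and the extension to arbitrary terms is a routine induction on term complexity using the congruence property.

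\textbf{Step $(\ref{equivalential 3})\Rightarrow(\ref{equivalential 2})$.} Assume $\Delta$ satisfies the three displayed schemata. Fix a matrix model $\langle\sbA,F\rangle$ and put $\theta=\{\langle a,b\rangle\in A^2:\Delta^\sbA(a,b)\subseteq F\}$. First, $\theta$ is reflexive by $\vdash\Delta(x,x)$; symmetry and transitivity are obtained by the standard trick of deriving $\Delta(y,x)$ from $\Delta(x,y)$ and $\Delta(x,z)$ from $\Delta(x,y)\cup\Delta(y,z)$ (these are themselves consequences of the three schemata — in fact derivable, as the paper notes the third demand generalizes to arbitrary terms, applied to the term $\Delta$ itself). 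The third schema gives that $\theta$ is compatible with every connective, so $\theta$ is a congruence. Next, the schema $\{x\}\cup\Delta(x,y)\vdash y$ shows $F$ is a union of $\theta$-classes. Finally, to see $\theta$ is the \emph{largest} such congruence: if $\eta$ is any congruence of $\sbA$ with $F$ a union of $\eta$-classes and $\langle a,b\rangle\in\eta$, then for each $\delta\in\Delta$ we have $\langle\delta^\sbA(a,a),\delta^\sbA(a,b)\rangle\in\eta$ and $\delta^\sbA(a,a)\in F$ (by $\vdash\Delta(x,x)$ and the fact that $\langle\sbA,F\rangle$ validates it), hence $\delta^\sbA(a,b)\in F$; thus $\langle a,b\rangle\in\theta$, i.e. $\eta\subseteq\theta$. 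So $\theta=\leibniz^\sbA F$. The converse direction $(\ref{equivalential 2})\Rightarrow(\ref{equivalential 3})$ is obtained by applying the displayed equality in the free matrix models: e.g. $\vdash\Delta(x,x)$ follows because in any reduced model the diagonal pair lies in $\leibniz^\sbA F=\id$, forcing $\Delta^\sbA(a,a)\subseteq F$, and similarly for the other two schemata (the congruence-compatibility schema corresponds to $\leibniz^\sbA F$ being a congruence). This simultaneously yields the ``witnesses (\ref{equivalential 2}) iff witnesses (\ref{equivalential 3})'' claim, and the generalization to arbitrary terms follows by induction on term structure from the connective case.

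\textbf{Steps $(\ref{equivalential 2})\Rightarrow(\ref{equivalential 1})$ and $(\ref{equivalential 1})\Leftrightarrow(\ref{equivalential 0})$.} If $\Delta$ witnesses (\ref{equivalential 2}), then $\leibniz^\sbA F$ is defined by a fixed conjunction of term-membership conditions, which is visibly monotone in $F$ (more of $F$ means more pairs satisfy $\Delta^\sbA(a,b)\subseteq F$), giving isotonicity; and commutation with inverse images of surjections — indeed of arbitrary homomorphisms — follows because $\Delta^\sbB(b,b')\subseteq h^{-1}[F]$ iff $h[\Delta^\sbB(b,b')]=\Delta^\sbA(h(b),h(b'))\subseteq F$. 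For $(\ref{equivalential 1})\Rightarrow(\ref{equivalential 0})$: $\cl{Mod}^*(\vdash)$ is always closed under $\Iop$ and $\Sop$ by general matrix theory, so the content is closure under $\Pop$; isotonicity of the Leibniz operator plus the strengthened commutation property (\ref{inverse homs}) is exactly the hypothesis known (from the standard theory, e.g.\ \cite{BP89,Cze01,Fon16}) to force this — one checks that a product of reduced models is reduced by using (\ref{inverse homs}) along the projections together with isotonicity to control $\leibniz$ of the product filter. Conversely $(\ref{equivalential 0})\Rightarrow(\ref{equivalential 1})$ is the converse half of that same standard equivalence. Since all of this is ``found in standard texts'' as the excerpt stresses, I would cite rather than reprove it, keeping the self-contained work concentrated on the $\Delta$-manipulations.

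\textbf{Main obstacle.} The genuinely delicate point is $(\ref{equivalential 0})\Rightarrow(\ref{equivalential 3})$ — producing the witnessing set $\Delta$ of binary terms from mere closure of $\cl{Mod}^*(\vdash)$ under $\Iop,\Sop,\Pop$. The standard argument runs through the free algebra $\sbT(x,y)$ on two generators: one shows that the pair $\langle x,y\rangle$ together with the relevant filter data determines, via the prevariety structure and the behaviour of the Leibniz operator, a \emph{set} of binary terms $\delta$ such that $\langle x,y\rangle\in\leibniz$ of the appropriate filter iff all $\delta(x,y)$ lie in it; then substitution-invariance propagates this to all pairs. Getting the quantifiers right here — in particular that a single set $\Delta$ (possibly infinite, since $\,\vdash$ need not be finitary) works uniformly — is where care is needed, and I would lean on the precise formulation already available in the literature (and on the syntactic characterization of axiomatization preceding Definition~\ref{def:logic class}) rather than redoing it from scratch.
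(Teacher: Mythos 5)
First, a point of comparison: the paper does not prove Theorem~\ref{thm:equivalential} at all. It is presented as background, with the explicit remark that all claims of that section ``can be found in standard texts'' and with sources listed after the statement (\cite{Woj88,BP92,Cze81,Cze01,Her97,PW74}). So there is no in-paper argument to measure yours against; your sketch has to stand on its own as a reconstruction of the standard proof, and for the most part it does: the implications (\ref{equivalential 3})$\,\Rightarrow\,$(\ref{equivalential 2})$\,\Rightarrow\,$(\ref{equivalential 1}), the extraction of the ``witnesses (\ref{equivalential 2}) iff witnesses (\ref{equivalential 3})'' claim, the derivation of symmetry and transitivity of the relation $\{\langle a,b\rangle:\Delta^\sbA(a,b)\subseteq F\}$ from the three schemata, and the induction extending the replacement schema from connectives to arbitrary terms are all the standard moves and are essentially correct.

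There is, however, one genuine error. You assert that $\cl{Mod}^*(\vdash)$ is ``always closed under $\Iop$ and $\Sop$ by general matrix theory, so the content [of (\ref{equivalential 1})$\,\Rightarrow\,$(\ref{equivalential 0})] is closure under $\Pop$.'' Closure under $\Iop$ is automatic, but closure under $\Sop$ is not: for an inclusion $h\colon\sbB\mrig\sbA$ one only has $\leibniz^\sbB h^{-1}[F]\supseteq h^{-1}[\leibniz^\sbA F]$ in general, so a submatrix of a reduced matrix need not be reduced---indeed, closure of $\cl{Mod}^*(\vdash)$ under submatrices is itself one of the classical characterizations of equivalentiality, so it cannot come for free. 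This is precisely why condition (\ref{equivalential 1}) demands (\ref{inverse homs}) for \emph{all} homomorphisms rather than only the surjective ones (for which it always holds): applying it to inclusion maps is what yields closure under $\Sop$, just as applying it to the product projections, together with isotonicity, yields closure under $\Pop$. The repair is immediate, but as written the step is false. Beyond that, the only other weakness is that the genuinely hard direction (\ref{equivalential 0})$\,\Rightarrow\,$(\ref{equivalential 3})---manufacturing the set $\Delta$ of binary terms from closure of $\cl{Mod}^*(\vdash)$ under $\Sop$ and $\Pop$---is deferred entirely to the literature. That is defensible here, since the paper itself cites rather than proves the whole theorem, but it does mean the one direction that is not routine is the one not actually argued.
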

We say that $\,\vdash$ is \emph{equivalential} if the conditions in Theorem~\ref{thm:equivalential} hold.
The elements of the set $\Delta$ in (\ref{equivalential 2}) or (\ref{equivalential 3})
are then called \emph{equivalence formulas} for $\,\vdash$,
and they are unique in the sense that $\Delta(x,y)\dashv\vdash\Delta'(x,y)$ for any other such set $\Delta'$.
In this case,
$\Delta(x,y)\models_{\cl{Mod}^*(\vdash)}x\approx y$, by (\ref{equivalential 2}).  For the roots
of Theorem~\ref{thm:equivalential}, see \cite[pp.\,222--3]{Woj88}, as well as \cite{BP92,Cze81,Cze01,Her97,PW74}.

\begin{remark}\label{axiomatizations 1}
Suppose $\Delta$ is a set of equivalence formulas for $\,\vdash$, with $r$ as above.
Then $\cl{Mod}^*(\vdash)$ is clearly axiomatized
by the implications
\[
\{r(\gamma):\gamma\in\Gamma\}\,/\,r(\varphi)
\]
corresponding
to the rules $\Gamma/\varphi$ in any given axiomatization $\Xi$ of $\,\vdash$, together with the postulate
\[
\{r(\rho(x,y)):\rho\in\Delta\}\,/\,x\approx y.
\]
Thus,
if $\mathit{var}(\Xi)$ is a set, then $\cl{Mod}^*(\vdash)$ is a
$(\left|\mathit{var}(\Xi)\right|+\aleph_0)$--prevariety.
In general, $\cl{Mod}^*(\vdash)$ is a quasivariety iff $\,\vdash$ is
\emph{finitely equivalential} (i.e., equipped with a finite
set of equivalence formulas) and finitary \cite{Cze81,BP92}.
\end{remark}

We define $\cl{Alg}^*(\vdash)=\{\sbA: \langle\sbA,F\rangle\in\cl{Mod}^*(\vdash)\textup{ for some }F\}$.

Given a class $\cl{K}\cup\{\sbA\}$ of similar algebras, let $\mathit{Con}_{\cl{K}}\alg{A}$
denote the set of all $\cl{K}$--\emph{congruences} of
$\sbA$, i.e., all congruences $\theta$ such that
$\alg{A}/\theta \in \cl{K}$.  If $\cl{K}$ is a prevariety, then
$\mathit{Con}_{\cl{K}}\alg{A}$
is closed under arbitrary
intersections and
is therefore the universe of
a complete lattice,
${\boldsymbol{\mathit{Con}}}_{\cl{K}}\alg{A}$,
ordered by
inclusion.

\pagebreak[2]

\begin{theorem}\label{thm:algebraizable}
The following conditions on\/ $\,\vdash$ are
equivalent.
\begin{enumerate}
\item\label{algebraizable 1}
$\,\vdash$ is equivalential and its reduced matrix models are
determined by their algebra reducts, i.e.,
whenever\/ $\langle\sbA,F\rangle,\langle\sbA,G\rangle\in\cl{Mod}^*(\vdash)$\textup{,} then\/ $F=G$.
\item\label{algebraizable 2}
$\cl{Alg}^*(\vdash)$ is a prevariety and, for each algebra $\sbA$\textup{,} the map\/ $F\mapsto\leibniz^\sbA F$
defines a lattice isomorphism from\/ $\boldsymbol{\mathit{Fi}}_\vdash\sbA$ onto\/
$\boldsymbol{\mathit{Con}}_{\cl{Alg}^*(\vdash)}\sbA$\textup{.}

\item\label{algebraizable 3}
There exist a class\/ $\cl{K}$ of algebras, a set\/ $\{\langle\de_i,\varepsilon_i\rangle:i\in I\}$ of pairs of unary terms
and a set\/ $\Delta$
of binary terms
such that, for any set\/ $\Gamma\cup\{\varphi\}$ of terms,
\begin{align*}
& \quad\quad\quad\; \Gamma\vdash\varphi \textup{ \,iff\, } \{\de_i(\gamma)\approx\varepsilon_i(\gamma):\gamma\in \Gamma,\, i\in I\}\models_{\cl{K}}
\{\de_i(\varphi)\approx\varepsilon_i(\varphi):i\in I\};\\
& \quad\quad\quad\quad\quad\;\; \{\de_i(\rho(x,y))\approx\varepsilon_i(\rho(x,y)):i\in I,\,\rho\in \Delta\}\, =\!\mid\models_\cl{K} \,x\approx y.
\end{align*}
\end{enumerate}
In this case,
$\Delta$
is a set of equivalence formulas for\/ $\,\vdash$\textup{,} and\/
$\cl{Alg}^*(\vdash)$ is the unique prevariety\/ $\cl{K}$ of algebras for which\/
\textup{(\ref{algebraizable 3})} holds.
\end{theorem}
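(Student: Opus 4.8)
The plan is to prove Theorem~\ref{thm:algebraizable} by establishing the cycle of implications (\ref{algebraizable 1})$\Rightarrow$(\ref{algebraizable 2})$\Rightarrow$(\ref{algebraizable 3})$\Rightarrow$(\ref{algebraizable 1}), since this is the standard architecture for ``algebraizability = equivalential $+$ Leibniz injective'', and none of the arguments is sensitive to whether the variable collection is a set or a proper class. Throughout, $r$ denotes the unary truth predicate, $\Delta$ the equivalence formulas supplied by Theorem~\ref{thm:equivalential}, and for a term $\varphi(\vec{x})$ and a reduced model $\langle\sbA,F\rangle$ I will freely use that $\Delta^\sbA(a,b)\subseteq F$ iff $a=b$.

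First I would do (\ref{algebraizable 1})$\Rightarrow$(\ref{algebraizable 2}). Equivalentiality already gives that $\cl{Mod}^*(\vdash)$ is a prevariety (Theorem~\ref{thm:equivalential}(\ref{equivalential 0})), and the ``reduced models are determined by their reducts'' hypothesis makes $\sbA\mapsto$ (the unique $F$ with $\langle\sbA,F\rangle\in\cl{Mod}^*(\vdash)$) a bijection between the objects of $\cl{Alg}^*(\vdash)$ and those of $\cl{Mod}^*(\vdash)$; since matrix homomorphisms then coincide with algebra homomorphisms that happen to respect the filters, one checks that $\cl{Alg}^*(\vdash)$ inherits closure under $\Iop,\Sop,\Pop$ from $\cl{Mod}^*(\vdash)$, hence is a prevariety. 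For the lattice isomorphism, recall the general facts quoted before Theorem~\ref{thm:equivalential}: $F\mapsto\leibniz^\sbA F$ is always a map $\mathit{Fi}_\vdash\sbA\to\Con\sbA$, it lands in $\Con_{\cl{Alg}^*(\vdash)}\sbA$ because $\sbA/\leibniz^\sbA F$ carries the reduced model $\langle\sbA/\leibniz^\sbA F,F/\leibniz^\sbA F\rangle$, it is order-reflecting by (\ref{inverse homs}) applied to quotient maps, and under the present hypothesis it is injective (two filters with the same Leibniz congruence give two reduced models on the same quotient algebra, which must coincide by (\ref{algebraizable 1}), so the filters agree); surjectivity onto $\Con_{\cl{Alg}^*(\vdash)}\sbA$ follows by pulling back, along the quotient map, the unique filter that reduces $\sbA/\theta$ and using (\ref{inverse homs}) once more. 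Isotonicity of the Leibniz operator (already part of equivalentiality) upgrades the order-reflecting bijection to a lattice isomorphism.

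Next, (\ref{algebraizable 2})$\Rightarrow$(\ref{algebraizable 3}). Take $\cl{K}=\cl{Alg}^*(\vdash)$ and $\Delta$ the equivalence formulas. The transformer sending a term $\varphi$ to the equation set $\{\de_i(\varphi)\approx\ep_i(\varphi):i\in I\}$ is produced in the usual way: the lattice isomorphism $\boldsymbol{\mathit{Fi}}_\vdash\sbT(\omega)\cong\boldsymbol{\Con}_{\cl{K}}\sbT(\omega)$ on a countably generated free algebra, composed with the algebraic semantics $\Gamma\vdash\varphi$ iff $\varphi$ lies in the $\vdash$--filter generated by $\Gamma$, yields that $\Gamma\vdash\varphi$ iff the congruence generated by $\{\langle\de_i(\gamma),\ep_i(\gamma)\rangle\}$ identifies $\de_i(\varphi)$ with $\ep_i(\varphi)$ for all $i$; reading this off in $\cl{K}$ gives the first displayed equivalence of (\ref{algebraizable 3}). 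The interderivability $\{\de_i(\rho(x,y))\approx\ep_i(\rho(x,y)):i\in I,\rho\in\Delta\}\;\Dashv\models_{\cl{K}}\;x\approx y$ is exactly the algebraic shadow of ``$\Delta$ are equivalence formulas'': one direction is $\Delta(x,y)\models_{\cl{Mod}^*(\vdash)}x\approx y$ (Theorem~\ref{thm:equivalential}(\ref{equivalential 2})) transported to $\cl{K}$, the other is the reflexivity law $\vdash\Delta(x,x)$ together with the congruence/replacement properties of $\Delta$. Finally (\ref{algebraizable 3})$\Rightarrow$(\ref{algebraizable 1}) is the routine reconstruction: the two equivalences of (\ref{algebraizable 3}) let one define $\Delta$-based matrices and verify directly (via Theorem~\ref{thm:equivalential}(\ref{equivalential 3}), whose three schemata all follow by feeding suitable instances into the ``iff'' of (\ref{algebraizable 3})) that $\,\vdash$ is equivalential with equivalence formulas $\Delta$, and that a reduced model $\langle\sbA,F\rangle$ must have $F=\{a\in A:\sbA\models\de_i(a)\approx\ep_i(a)\text{ for all }i\}$, which depends only on $\sbA$; this also pins down $\cl{Alg}^*(\vdash)$ as the prevariety generated by the $\cl{K}$ of (\ref{algebraizable 3}), giving the uniqueness clause.

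The main obstacle I anticipate is bookkeeping rather than conceptual: making precise the passage between filter-generation, congruence-generation, and the defining equivalences of (\ref{algebraizable 3}) while keeping careful track of which variables occur where, and ensuring — since we have deliberately refused to assume $\mathit{Var}$ is a proper class — that every generating set invoked (for filters, for congruences, for the free algebras witnessing the transformers) is genuinely a \emph{set} and small enough to live inside the available slice. As the excerpt itself notes, the proofs of these standard facts ``are not affected by the extent of the class of variables'', so the right move is to carry out each step over a fixed infinite $X\subseteq\mathit{Var}$ large enough to contain all terms in sight and then invoke strong substitution-invariance (Definition~\ref{def:logic class}(\ref{substitution invariance2})) to globalize; I would flag this once at the start and thereafter suppress the slice superscripts.
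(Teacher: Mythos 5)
The paper does not actually prove Theorem~\ref{thm:algebraizable}: it is quoted as a known result, the authors noting that ``all claims in the present section can be found in standard texts'' \cite{BP89,Cze01,FJP03,Fon16} and that ``their proofs are not affected by the extent of the class of variables''. Your cycle (\ref{algebraizable 1})$\,\Rightarrow\,$(\ref{algebraizable 2})$\,\Rightarrow\,$(\ref{algebraizable 3})$\,\Rightarrow\,$(\ref{algebraizable 1}) is precisely the standard argument from those sources (see also \cite{BP92,Her97,CP99}), and your closing remark about working inside one sufficiently large slice and then globalizing via Definition~\ref{def:logic class}(\ref{substitution invariance2}) is exactly the point the authors intend; the only steps you leave genuinely unexpanded---that $F\mapsto\leibniz^\sbA F$ is order-reflecting (most cleanly obtained from its commuting with arbitrary intersections, via the $\Delta$--characterization in Theorem~\ref{thm:equivalential}(\ref{equivalential 2})), and the actual extraction of the defining equations $\de_i\approx\ep_i$ from the lattice isomorphism on a free algebra in (\ref{algebraizable 2})$\,\Rightarrow\,$(\ref{algebraizable 3})---are routine but constitute the real content of the cited proofs.
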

We say that $\,\vdash$ is \emph{algebraizable} and, more explicitly, that $\cl{Alg}^*(\vdash)$ \emph{algebraizes} $\,\vdash$,
if the conditions of Theorem~\ref{thm:algebraizable} hold.
The pairs in \textup{(\ref{algebraizable 3})} are then unique
in the sense that
\[
\{\de_i(x)\approx\varepsilon_i(x):i\in I\}\,=\!\mid\models_{\cl{Alg}^*(\vdash)}\{\de_j'(x)\approx\varepsilon_j'(x):j\in J\}
\]
for any other such set $\{\langle\de_j',\varepsilon_j'\rangle:j\in J\}$\textup{.}
In this case, when ${\langle\sbA,F\rangle\in{\cl{Mod}^*(\vdash)}}$,
then
$F=\{a\in A:\de_i^\sbA(a)=\varepsilon_i^\sbA(a) \text{ for all }i\in I\}$.  The concrete categories $\cl{Mod^*}(\vdash)$ and
$\cl{Alg}^*(\vdash)$ are therefore isomorphic when $\,\vdash$ is algebraizable.

The original definition of algebraizability
is due to Blok and
Pigozzi \cite{BP89}.  Its scope was widened in \cite{BJ06,Cze01,Her97} and adapted to
logics over proper classes in \cite{CP99}.  For the origins of Theorem~\ref{thm:algebraizable},
see \cite{BP89,BP92} also.

\begin{remark}\label{axiomatizations 2}
When the conditions of Theorem~\ref{thm:algebraizable} hold, then
$\,\vdash$ is axiomatized by the postulates captured in Theorem~\ref{thm:equivalential}(\ref{equivalential 3}),
in the relation
\[
x\dashv\vdash\mbox{$\bigcup$}_{i\in I}\Delta(\de_i(x),\varepsilon_i(x))
\]
and in
the rules $\bigcup_{\langle\xi,\eta\rangle\in\Sigma}\Delta(\xi,\eta)\vdash\Delta(\varphi,\psi)$
corresponding to the equational implications $\Sigma/\varphi\approx\psi$ belonging to any axiomatization of
$\cl{Alg}^*(\vdash)$.  In this case, therefore, if $\cl{Alg}^*(\vdash)$ is an $\mathfrak{m}$--prevariety,
then $\,\vdash$ can be axiomatized using at most $\mathfrak{m}$ variables.
\end{remark}

\section{Beth Definability Properties}\label{sec:beth definability properties}

\begin{definition}\label{beth property classes}
(\cite{BH06}) A logic $\,\vdash$ over the proper class $\mathit{Var}$ is said to have the \emph{(deductive) infinite Beth
(definability) property} if the following holds for all disjoint subsets $X,Z$ of $\mathit{Var}$, with $T(X)\neq\emptyset$,
and all $\Gamma\subseteq T(X\cup Z)$: if,
\begin{tabbing}
\quad\quad \= \textup{for each $z\in Z$ and each homomorphism $h\colon \sbT(X\cup Z)\mrig \sbT(Y)$,}\\
\> \textup{with $Y\in\mathcal{P}(\mathit{Var})$, such that
$h(x)=x$ for all $x\in X$, we have}\\
\> \textup{$\Gamma\cup h[\Gamma]\models_{\cl{Mod}^*(\vdash)} z\approx h(z)$,}
\end{tabbing}
then, for each $z\in Z$, there exists $\varphi_z\in T(X)$ such that $\Gamma\models_{\cl{Mod}^*(\vdash)}z\approx\varphi_z$.
\end{definition}

\begin{theorem}\label{thm:BH06}
\textup{(\cite[Thm.~3.12]{BH06})}
\,Let\/ $\,\vdash$ be an equivalential logic over a proper class.
Then $\,\vdash$ has the
infinite Beth property iff, in the prevariety\/ $\cl{Mod}^*(\,\vdash)$\textup{,} all epimorphisms are
surjective.
\end{theorem}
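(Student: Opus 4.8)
The plan is to prove Theorem~\ref{thm:BH06} by establishing a dictionary between the implicit-definability hypothesis on $\,\vdash$ and the epic-substructure structure in $\cl{Mod}^*(\vdash)$, exploiting the equivalence formulas $\Delta$ from Theorem~\ref{thm:equivalential} throughout. First I would recall that, since $\,\vdash$ is equivalential, Remark~\ref{axiomatizations 1} tells us $\cl{K}\seteq\cl{Mod}^*(\vdash)$ is a prevariety axiomatized by implications, so the notion of a $\cl{K}$--epic substructure is available and Theorem~\ref{thm:campercholi-rtl} applies. The key translation is: a rule-style statement $\Gamma\cup h[\Gamma]\models_{\cl{K}} z\approx h(z)$ (with $r$ the truth predicate) should be read as an atomic consequence among the matrix structures, and conversely the atomic formulas witnessing epicness via Theorem~\ref{thm:campercholi-rtl} can be pushed through $\Delta$ to become terms $\varphi_z$ realizing explicit definability. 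The ``uniqueness'' clause $\Delta(x,y)\models_\cl{K} x\approx y$ noted after Theorem~\ref{thm:equivalential} is what lets one move freely between equations and $r$-formulas inside $\cl{K}$.

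For the direction ``ES property $\Rightarrow$ infinite Beth'', I would start from $X,Z,\Gamma$ as in Definition~\ref{beth property classes} satisfying the implicit-definability hypothesis, and form the term matrix $\langle\sbT(X\cup Z),F\rangle$ where $F$ is the $\,\vdash$--filter generated by $\Gamma$; quotienting by its Leibniz congruence yields a reduced model $\spc{B}\in\cl{K}$, inside which the image $\spc{A}$ of the subalgebra generated by $X$ (together with the induced filter) is a substructure with $\spc{B}=\Sg^{\spc{B}}(A\cup Z')$ for $Z'$ the images of $Z$. The implicit-definability hypothesis, restated through $\Delta$ and $r$, says precisely that any two $\cl{K}$--homomorphisms out of $\spc{B}$ agreeing on $A$ agree on each $z\in Z'$ as well (substitution-invariance, clause~(\ref{substitution invariance2}), supplies the homomorphisms $h$), i.e.\ $\spc{A}$ is $\cl{K}$--epic in $\spc{B}$. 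By the ES property $\spc{A}=\spc{B}$, so each $z\in Z'$ equals the image of some term over $X$; lifting back and using that $\leibniz$ is computed by $\Delta$, we get $\Gamma\models_{\cl{K}} z\approx\varphi_z$ for a suitable $\varphi_z\in T(X)$, which is the infinite Beth property.

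For the converse, ``infinite Beth $\Rightarrow$ ES property'', I would argue contrapositively: suppose $\spc{A}$ is a proper $\cl{K}$--epic substructure of some $\spc{B}\in\cl{K}$. Choosing a set $X$ of variables bijecting with $A$ and a set $Z$ bijecting with $B\bs A$, build a surjection $\sbT(X\cup Z)\mrig\sbB$ and let $\Gamma$ collect all terms $\gamma(\vec{x},\vec{z})$ with $\spc{B}\models r(\gamma)$ — plus, to capture the algebra structure, the equation-coded pairs, i.e.\ $\Gamma$ should be taken so that $\Gamma$ presents $\spc{B}$ over $X\cup Z$ in the language with $r$; concretely one uses $\Delta$ to encode equations $\varphi\approx\psi$ as $\Delta(\varphi,\psi)$, which lie in the filter exactly when the equation holds in $\spc{B}$. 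Epicness of $\spc{A}$, via substitution-invariance, gives the hypothesis $\Gamma\cup h[\Gamma]\models_{\cl{K}} z\approx h(z)$ for all relevant $h$; the infinite Beth property then yields $\varphi_z\in T(X)$ with $\Gamma\models_{\cl{K}} z\approx\varphi_z$, which unwinds to say the element of $B$ named by $z$ already lies in $A$ — contradicting that $\spc{A}$ is proper (for an appropriate choice of the $z$ naming an element of $B\bs A$).

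The main obstacle I anticipate is the bookkeeping in the translation between the matrix (relational, with $r$) presentation and the equational presentation: one must be careful that the atomic formulas $\Sigma$ produced by Theorem~\ref{thm:campercholi-rtl} (which may involve both $r$ and $\approx$) are correctly converted, using the equivalence formulas $\Delta$ and the identity $\Delta(x,y)\models_\cl{K} x\approx y$, into consequences about $\,\vdash$ expressed purely with the filter/$r$; and symmetrically that explicit definability $\Gamma\models_\cl{K} z\approx\varphi_z$ transfers back to an honest identity in $\spc{B}$. A secondary subtlety is that Definition~\ref{beth property classes} quantifies over \emph{all} homomorphisms $h$ fixing $X$ (not just substitutions avoiding $Z$), so in the contrapositive direction one needs the full strength of strong substitution-invariance, clause~(\ref{substitution invariance2}), and in the forward direction one must check the epicness condition against exactly that family of maps — but since $\cl{K}$ is closed under $\Sop$ and $\Pop$, homomorphisms into arbitrary members of $\cl{K}$ suffice, matching Theorem~\ref{thm:campercholi-rtl}.
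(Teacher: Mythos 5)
The paper does not prove this statement---it is quoted verbatim from Blok and Hoogland \cite[Thm.~3.12]{BH06} and used as a black box (the only echo of its argument in the paper is the remark ``just as in the proof of \cite[Thm.~3.12]{BH06}'' inside the proof of Theorem~\ref{thm:pre bridge}). Your reconstruction is correct and is essentially the standard Blok--Hoogland argument: in one direction, present $\spc{B}$ by a surjection $\sbT(X\cup Z)\mrig\sbB$, take $\Gamma$ to be the preimage of the filter (so that, by (\ref{inverse homs}) and Theorem~\ref{thm:equivalential}(\ref{equivalential 2}), the kernel of the presentation is $\{\langle\mu,\nu\rangle:\Delta(\mu,\nu)\subseteq\Gamma\}$), and read off implicit definability from epicness; in the other, reduce the matrix $\langle\sbT(X\cup Z),\mathit{Fg}_\vdash\Gamma\rangle$ and identify $A=B$ with the existence of the $\varphi_z$. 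The only point your sketch leaves implicit---passing between the substitutions $h$ of Definition~\ref{beth property classes} and pairs of homomorphisms into a member of $\cl{Mod}^*(\vdash)$ via a single valuation on a fresh disjoint copy of $Z$ (which is where the proper class of variables is genuinely used)---is exactly the step the cited proof supplies, so there is no gap in the approach.
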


We have not found the following definition in the published literature.

\begin{definition}\label{beth property sets}
Let $\,\vdash$ be a logic over an infinite set $V$.
We shall say that $\,\vdash$ has the ($V$--) \emph{localized infinite Beth property} provided that the
following is true for all disjoint subsets $X,Z$ of $V$ and all $\Gamma\subseteq T(X\cup Z)$, such that
$T(X)\neq\emptyset$ and $\left|V\ld(X\cup Z)\right|\geq \left|Z\right|+\aleph_{0\,}$: \,if,
\begin{tabbing}
\quad\quad \= \textup{for each $z\in Z$ and each endomorphism $h$ of $\sbT(V)$, such that}\\
\> \textup{$h(x)=x$ for all $x\in X$, we have $\Gamma\cup h[\Gamma]\models_{\cl{Mod}^*(\vdash)} z\approx h(z)$,}
\end{tabbing}
then, for each $z\in Z$, there exists $\varphi_z\in T(X)$ such that $\Gamma\models_{\cl{Mod}^*(\vdash)}z\approx\varphi_z$.
\end{definition}

The displayed assumptions in Definitions~\ref{beth property classes} and \ref{beth property sets}
will both be pronounced as `$\Gamma$ \emph{defines $Z$ implicitly in terms of $X$} in $\,\vdash$'.
(There is no ambiguity, since the two possibilities for $\,\vdash$ are mutually exclusive.)
The term $\varphi_z$ in the conclusion is called an \emph{explicit definition} of $z$ in terms of $X$,
with respect to $\Gamma$, in $\,\vdash$.

If $\Delta$ is a set of equivalence formulas for $\,\vdash$, then in Definitions~\ref{beth property classes} and
\ref{beth property sets},
we may replace
$\Gamma\cup h[\Gamma]\models_{\cl{Mod}^*(\vdash)}z\approx h(z)$ and
$\Gamma\models_{\cl{Mod}^*(\vdash)}z\approx\varphi_z$
by the intrinsic (but equivalent) respective demands
\begin{equation*}
\Gamma\cup h[\Gamma]\vdash \Delta(z,h(z)) \textup{ \,and\, } \Gamma\vdash \Delta(z,\varphi_z).
\end{equation*}

\begin{lemma}\label{bp implies bpv}
Let\, $\,\vdash$ be an equivalential logic
over the proper class $\mathit{Var}$\textup{,}
and let\/
$V\in\mathcal{P}(\mathit{Var})$ be infinite.
If\/ $\,\vdash$ has the infinite Beth property, then $\,\vdash^V$ has the localized infinite Beth property.
\end{lemma}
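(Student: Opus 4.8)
The goal is to deduce the $V$--localized infinite Beth property from the (unrestricted) infinite Beth property, under the assumption that $\,\vdash$ is equivalential over $\mathit{Var}$. The natural strategy is to show that the localized hypothesis---$\Gamma$ defines $Z$ implicitly in terms of $X$ in $\,\vdash^V$, with the cardinality side condition $\left|V\bs(X\cup Z)\right|\geq\left|Z\right|+\aleph_0$---already forces the \emph{unrestricted} implicit-definability hypothesis of Definition~\ref{beth property classes} for the same $\Gamma,X,Z$, so that the full Beth property can be applied to produce the explicit definitions $\varphi_z\in T(X)$; and those $\varphi_z$ witness exactly what the localized property asks for. So the crux is: localized implicit definability $\Rightarrow$ unrestricted implicit definability. (The reverse passage, from the conclusion of the infinite Beth property back to the conclusion we want, is immediate, since both conclusions are literally ``$\Gamma\models_{\cl{Mod}^*(\vdash)}z\approx\varphi_z$ for some $\varphi_z\in T(X)$''.)

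To carry this out, suppose $X,Z\subseteq V$ are disjoint with $T(X)\neq\emptyset$, that the cardinality condition holds, and that the localized hypothesis holds: for every $z\in Z$ and every endomorphism $h$ of $\sbT(V)$ fixing $X$ pointwise, $\Gamma\cup h[\Gamma]\models_{\cl{Mod}^*(\vdash)} z\approx h(z)$. I must verify the unrestricted hypothesis: for every $z\in Z$ and every homomorphism $k\colon\sbT(X\cup Z)\mrig\sbT(Y)$ (with $Y\in\mathcal{P}(\mathit{Var})$ arbitrary) fixing $X$ pointwise, $\Gamma\cup k[\Gamma]\models_{\cl{Mod}^*(\vdash)} z\approx k(z)$. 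The difficulty is that such a $k$ may move the variables of $Z$ into a set $Y$ that is disjoint from $V$, or larger than the ``spare room'' $V\bs(X\cup Z)$ provides; so $k$ need not be (the restriction of) an endomorphism of $\sbT(V)$, and the localized hypothesis does not apply to it directly. The fix is a renaming argument: since $\Gamma\cup\{z\}$ only involves variables in $X\cup Z$, and $k[\Gamma\cup\{z\}]$ only involves finitely-or-$\left|Z\right|$-many ``new'' variables (those in $k[Z]$, of which there are at most $\left|Z\right|$), I can compose $k$ with a suitable renaming to land inside $V$. Concretely, $\left|\mathit{var}(k[Z])\right|\leq\left|Z\right|\leq\left|V\bs(X\cup Z)\right|$, so there is an injection of $\mathit{var}(k[Z])\cup(\textup{whatever finite extra variables appear in }k[\Gamma])$ into $V\bs(X\cup Z)$---here one uses $\left|Z\right|+\aleph_0$ on the right---extended by the identity on $X$; call the induced homomorphism $\iota$. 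Then $h\seteq\iota\circ k$, extended by the identity on $V\bs(X\cup Z)$, \emph{is} an endomorphism of $\sbT(V)$ fixing $X$, so the localized hypothesis gives $\Gamma\cup h[\Gamma]\models_{\cl{Mod}^*(\vdash)}z\approx h(z)$, i.e. $\Gamma\cup\iota[k[\Gamma]]\models_{\cl{Mod}^*(\vdash)}z\approx\iota(k(z))$.

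It remains to strip off $\iota$, i.e. to pass from $\Gamma\cup\iota[k[\Gamma]]\models_{\cl{Mod}^*(\vdash)}z\approx\iota(k(z))$ back to $\Gamma\cup k[\Gamma]\models_{\cl{Mod}^*(\vdash)}z\approx k(z)$. This is where equivalentiality enters, and it is the step I expect to require the most care. The relation $\models_{\cl{Mod}^*(\vdash)}$ on terms is, up to the equivalence formulas $\Delta$, the same as $\,\vdash$ itself (by Theorem~\ref{thm:equivalential}(\ref{equivalential 2}), $\Delta(x,y)\models_{\cl{Mod}^*(\vdash)}x\approx y$ and conversely $x\approx y\models_{\cl{Mod}^*(\vdash)}\Delta(x,y)$), and $\,\vdash$ is strongly substitution-invariant. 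So $\Gamma\cup\iota[k[\Gamma]]\models_{\cl{Mod}^*(\vdash)}z\approx\iota(k(z))$ translates into $\Gamma\cup\iota[k[\Gamma]]\vdash\Delta(z,\iota(k(z)))$, and I want to recover $\Gamma\cup k[\Gamma]\vdash\Delta(z,k(z))$. The point is that $\iota$ is an injection into $V$ that is the identity on $X$ (hence on all variables of $\Gamma$ and of $z$), so it has a ``partial inverse'' homomorphism $\iota^{-1}$ defined on $\sbT(\mathit{var}(\iota[\ldots])\cup X)$ that sends each $\iota$-image back to its preimage and is the identity on $X$. Applying $\iota^{-1}$ to the derivation (via strong substitution-invariance, Definition~\ref{def:logic class}(\ref{substitution invariance2})) sends $\Gamma\mapsto\Gamma$, $\iota[k[\Gamma]]\mapsto k[\Gamma]$, $z\mapsto z$, $\iota(k(z))\mapsto k(z)$, yielding exactly $\Gamma\cup k[\Gamma]\vdash\Delta(z,k(z))$, i.e. $\Gamma\cup k[\Gamma]\models_{\cl{Mod}^*(\vdash)}z\approx k(z)$. (One must make sure the fresh variables used in $\iota$ are chosen outside the range of $k$ on $X\cup Z$ before extending to an endomorphism of $\sbT(V)$, and outside $Y$ where relevant, which is possible precisely because of the $\left|Z\right|+\aleph_0$ slack; the $\aleph_0$ absorbs any finitely many auxiliary variables introduced in $k[\Gamma]$ beyond those in $k[Z]$.) Having established the unrestricted implicit-definability hypothesis, the infinite Beth property of $\,\vdash$ yields, for each $z\in Z$, a term $\varphi_z\in T(X)$ with $\Gamma\models_{\cl{Mod}^*(\vdash)}z\approx\varphi_z$, which is exactly the conclusion required for the $V$--localized infinite Beth property of $\,\vdash^V$. \qedhere
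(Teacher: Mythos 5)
Your proof is correct and follows essentially the same route as the paper's: you inject the image variables $\mathit{var}(k[Z])$ into $V\setminus(X\cup Z)$ using the cardinality slack, apply the localized hypothesis to the resulting endomorphism of $\sbT(V)$, and then undo the renaming via strong substitution-invariance---your $\iota$, $h$ and partial inverse are precisely the paper's $f$, $q$ and the retraction $g$ (packaged as the homomorphism $p$). The only nitpick is that $\left|\mathit{var}(k[Z])\right|$ can exceed $\left|Z\right|$ (each $k(z)$ is a term with finitely many variables, so the correct bound is $\left|Z\right|+\aleph_0$), but your subsequent appeal to the slack $\left|V\setminus(X\cup Z)\right|\geq\left|Z\right|+\aleph_0$ already absorbs this.
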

\begin{proof}
It is given that $\,\vdash$ has a set $\Delta$ of equivalence formulas.
Let $X$, $Z$ and $\Gamma$ be as in Definition~\ref{beth property sets}, and $h\colon\sbT(X\cup Z)\mrig\sbT(X\cup\ov{Z})$
as in Definition~\ref{beth property classes}, where $\ov{Z}\seteq\mathit{var}(h[Z])$.
Set $W=V\ld(X\cup Z)$.  As $V$ is infinite and $\Delta$ consists of binary terms,
we may assume that $\Delta\subseteq T(V)$.  In view of Theorem~\ref{thm:equivalential}(\ref{equivalential 3}),
$\Delta$ is also a set of equivalence formulas
for $\,\vdash^V$, so
we need only show that $\Gamma\cup h[\Gamma]\vdash\Delta(z,h(z))$ for all $z\in Z$.

As
$\left|\ov{Z}
\right|\leq\left|Z\right|+\aleph_0\leq\left|W
\right|$,
there are homomorphisms
$f\colon\sbT(\ov{Z})\mrig\sbT(W)$ and
$g\colon\sbT(W)\mrig\sbT(\ov{Z})$, with $g\circ f=\textup{id}_{T(\ov{Z})}$.
Let $q$ be the endomorphism of $\sbT(V)$ that agrees with $f\circ h$ on $Z$ and that fixes all elements of $V\ld Z$.
Assuming that $\Gamma$ defines $Z$ implicitly in terms of $X$ in $\,\vdash^V$, we infer that
$\Gamma\cup q[\Gamma]\,\textup{$\,\vdash^V$}\,\Delta(z,q(z))$ for all $z\in Z$.
Then, by Definition~\ref{def:logic class}(\ref{substitution invariance2}),
\begin{equation}\label{eq:pq}
p[\Gamma]\cup pq[\Gamma]\vdash p[\Delta(z,q(z))] \textup{ \,for all $z\in Z$},
\end{equation}
where $p\colon\sbT(V)\mrig\sbT(X\cup Z\cup\ov{Z})$ is the
homomorphism that agrees with $g$ on $W$, while fixing all elements of $X\cup Z$.  Now (\ref{eq:pq}) simplifies to
\[
\Gamma\cup h[\Gamma]\vdash\Delta(z,h(z)) \textup{ \,for all $z\in Z$},
\]
by the definitions of $p$ and $q$, and since $g\circ f=\textup{id}_{T(\ov{Z})}$.
\end{proof}

\begin{theorem}\label{thm:pre bridge}
Let\/ $\,\vdash$ be an equivalential logic over an infinite set\/ $V$\textup{,} where\/ $\,\vdash$ has at most\/ $\left|V\right|$
connectives and has
the localized infinite Beth
property.  Then no member of\/ $\cl{Mod}^*(\vdash)$
with at most\/ $\left|V\right|$ elements
has a proper\/
${\cl{Mod}^*(\vdash)}$--epic submatrix.
\end{theorem}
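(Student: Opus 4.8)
The plan is to contrapose: assume some matrix $\langle\sbA,F\rangle\in\cl{Mod}^*(\vdash)$ has a proper ${\cl{Mod}^*(\vdash)}$--epic submatrix $\langle\sbB,B\cap F\rangle$ with $|A|\le|V|$, and produce a failure of the $V$--localized infinite Beth property. The natural bridge is Theorem~\ref{thm:campercholi-rtl}. Applying it to the class $\cl{K}=\cl{Mod}^*(\vdash)$, the structure $\spc{B}=\langle\sbA,F\rangle$ (confusingly the bigger one), its epic substructure $\spc{A}=\langle\sbB,B\cap F\rangle$, and $Z=A\ssetminus B$, we get for each $b\in Z$ a set $\Sigma_b(\vec x,\vec z,v)$ of atomic formulas in the matrix signature (so: equations and applications of the unary truth predicate $r$) with $\spc{B}\models\Sigma_b(\vec a_b,\vec c_b,b)$ for suitable $\vec a_b$ in $B$ and $\vec c_b$ in $A$, together with $\Sigma_b(\vec x,\vec z,v_1)\cup\Sigma_b(\vec x,\vec y,v_2)\models_\cl{K}v_1\approx v_2$. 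The last clause of that theorem lets us take $|\vec z|\le|Z|\le|A|\le|V|$, and $\vec c_b\in Z$.

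**Next I would transport this syntactic data into the logic.** Choose a surjective matrix homomorphism from a term matrix onto $\spc{B}=\langle\sbA,F\rangle$: pick variables $x_a$ for $a\in B$ and $x_b$ for $b\in Z$, with $X=\{x_a:a\in B\}$ and $Z'=\{x_b:b\in Z\}$ disjoint, and the evaluation map $e\colon\sbT(X\cup Z')\mrig\sbA$ sending each variable to the corresponding element; since $B\cup Z$ generates $\sbA$, $e$ is onto. For each $b\in Z$ rewrite $\Sigma_b(\vec a_b,\vec c_b,b)$ as a set of terms/equations over $X\cup Z'$ by pulling back along $e$: the equations in $\Sigma_b$ become a set of term-pairs, and each atom $r(\psi)$ asserts membership of the value in $F$. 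Using the equivalence formulas $\Delta$ for $\,\vdash$ (which exist by equivalentiality, and which we may take over $V$ after reserving countably many spare variables, exactly as in Lemma~\ref{bp implies bpv}), and the fact that $\Delta(u,w)\models_{\cl{Mod}^*(\vdash)}u\approx w$ while reduced models are not forced to collapse $F$, I convert the matrix condition $\Sigma_b(\vec x,\vec z,v_1)\cup\Sigma_b(\vec x,\vec y,v_2)\models_\cl{K}v_1\approx v_2$ into a statement of the form $\Gamma\cup h[\Gamma]\models_{\cl{Mod}^*(\vdash)}z\approx h(z)$ for a suitable single set $\Gamma\subseteq T(X\cup Z')$ (the union of the term-translations of all the $\Sigma_b$) and the variable-swapping endomorphism $h$ that renames $Z'$ to a fresh disjoint copy $\overline{Z'}$ while fixing $X$. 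The cardinality bookkeeping — $|X\cup Z'|\le|A|\le|V|$, $|\overline{Z'}|=|Z'|\le|Z|$, and the spare variables for $\Delta$ and for the renaming all fit inside $V$ because $|V|$ is infinite and $\ge$ the number of connectives — is routine but must be checked so that Definition~\ref{beth property sets}'s hypothesis $|V\ssetminus(X\cup Z')|\ge|Z'|+\aleph_0$ holds.

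**This shows $\Gamma$ defines $Z'$ implicitly in terms of $X$ in $\,\vdash^V$.** The localized infinite Beth property then supplies, for each $b\in Z$, a term $\varphi_b\in T(X)$ with $\Gamma\models_{\cl{Mod}^*(\vdash)}x_b\approx\varphi_b$, equivalently $\Gamma\vdash\Delta(x_b,\varphi_b)$. Interpreting in $\langle\sbA,F\rangle$ via $e$: since $\spc{B}\models\Sigma_b(\vec a_b,\vec c_b,b)$ says precisely that $e$ satisfies $\Gamma$, we conclude $e(x_b)=e(\varphi_b)$, i.e.\ $b=\varphi_b^{\sbA}(e[X])$. But $e$ maps $T(X)$ into the subalgebra generated by $B$, whose universe is $B$ (the algebra reduct of the submatrix $\spc{A}$). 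Hence $b\in B$, contradicting $b\in Z=A\ssetminus B$. Therefore no such $\langle\sbA,F\rangle$ exists, which is the desired conclusion.

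**The main obstacle** I expect is the second paragraph: faithfully translating the matrix-level atomic implication, which natively talks about equations \emph{and} the truth predicate $r$, into a purely logic-level implicit-definability statement phrased only with $\models_{\cl{Mod}^*(\vdash)}$ and the equivalence formulas $\Delta$, while keeping all variable sets inside the fixed infinite set $V$ and honoring the side condition $|V\ssetminus(X\cup Z)|\ge|Z|+\aleph_0$. The atoms $r(\psi)$ are the delicate part — I must package them correctly into $\Gamma$ so that "$\spc{B}\models\Sigma_b$" becomes exactly "$e$ is a model-theoretic solution of $\Gamma$", and so that the Beth conclusion reads back as an equation that $e$ must respect; the interplay with $\leibniz$ and the characterization $\leibniz^\sbA F=\{\langle a,b\rangle:\Delta^\sbA(a,b)\subseteq F\}$ from Theorem~\ref{thm:equivalential}(\ref{equivalential 2}) is what makes this go through, and the renaming/fresh-variable accounting from the proof of Lemma~\ref{bp implies bpv} is the template to follow.
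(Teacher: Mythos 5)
Your argument is correct, but it reaches the implicit-definability statement by a different route than the paper. The paper does not invoke Theorem~\ref{thm:campercholi-rtl} here at all: it splits $V$ into two halves of full cardinality, maps a term algebra $\sbT(V')$ onto the big matrix, takes $\Gamma$ to be the \emph{entire preimage filter} $h^{-1}[F_B]$ (which is already a set of terms, so no translation of atomic formulas is ever needed), and verifies implicit definability of $Z$ in terms of $X$ directly by the two-homomorphisms argument of \cite[Thm.~3.12]{BH06}; the localized Beth property and the final ``$b=\varphi_z^{\sbB}(h[\vec{x}])\in A$'' contradiction then proceed exactly as in your last paragraph. Your version instead black-boxes the epicity through Theorem~\ref{thm:campercholi-rtl} and then pays a translation tax: $r$-atoms become terms, equations $\mu\approx\nu$ become $\Delta(\mu,\nu)$, and faithfulness of this translation over $\cl{Mod}^*(\vdash)$ rests on reducedness via Theorem~\ref{thm:equivalential}(\ref{equivalential 2}) together with $\vdash\Delta(x,x)$ --- all of which works, and which you correctly flag as the delicate point. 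Two details you wave at but should nail down: (a) Definition~\ref{beth property sets} quantifies over \emph{all} endomorphisms of $\sbT(V)$ fixing $X$, whereas Theorem~\ref{thm:campercholi-rtl} hands you only the generic renaming to a fresh copy $\overline{Z'}$; you must pass to arbitrary $k$ by substitution-invariance of $\models_{\cl{Mod}^*(\vdash)}$ (substitute $k(z)$ for each fresh variable), which works because $\Gamma\subseteq T(X\cup Z')$ is untouched by that substitution; (b) the side condition $|V\ssetminus(X\cup Z')|\ge|Z'|+\aleph_0$ forces you to confine $X\cup Z'$ to a half of $V$ from the outset, exactly as the paper does with its partition $V=V'\cup(V\ssetminus V')$. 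The paper's route is shorter because the preimage filter sidesteps the translation entirely; yours is more modular in that it reuses the syntactic characterization of epic substructures rather than re-running the BH06 argument.
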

\begin{proof}
As $V$ is infinite, it can be partitioned into sets $V'$ and $V\ld V'$, both of cardinality
$\left|V\right|$.  Let\/ $\mathcal{B}=\langle \sbB,F_B\rangle\in\cl{Mod}^*(\vdash)$, with $\left|B\right|\leq\left|V\right|$,
and consider a $\cl{Mod}^*(\vdash)$--epic submatrix
$\mathcal{A}=\langle\sbA,A\cap F_B\rangle$ of $\mathcal{B}$.  As $\left|B\right|\leq
\left|V'\right|$, there
is a surjection $V'\mrig B$, which may be extended to a homomorphism $h\colon\sbT(V')\mrig\sbB$.  Assume, with a
view to contradiction, that $A\neq B$, and let
\[
\textup{$X=V'\cap h^{-1}[A]$
\,and\, $Z=V'\cap h^{-1}[B\ld A]$,}
\]
so $X\cup Z=V'$ and $X\cap Z=\emptyset\neq T(X)$.  Then
$\Gamma\seteq h^{-1}[F_B]\in\mathit{Fi}_{\vdash\,}\sbT(V')$.

As $V\ld(X\cup Z)=V\ld V'$ has the same (infinite) cardinality as $V'$, which contains $Z$,
we have $\left|V\ld(X\cup Z)\right|\geq\left|Z\right|+\aleph_0$.
Therefore, because $\mathcal{A}$ is $\cl{Mod}^*(\vdash)$--epic in $\mathcal{B}$, it can be shown, just as in the proof of
\cite[Thm.~3.12]{BH06}, that for any endomorphism $k$ of $\sbT(V)$ which
fixes all elements of $X$, and any homomorphism $g\colon\sbT(V)\mrig\sbC$, where $\langle\sbC,F_C\rangle\in\cl{Mod}^*(\vdash)$,
\[
\textup{if
$g[\Gamma\cup k[\Gamma]]\subseteq F_C$, then $g(z)=g(k(z))$ for all $z\in Z$.}
\]
That is,
$\Gamma$ defines $Z$ implicitly in terms of $X$ in $\,\vdash$.

Pick $b\in B\ld A$.  As $h|_{V'}\colon V'\mrig B$ is surjective, we have $b=h(z)$ for some $z\in Z$.
By the localized infinite Beth property, $\Gamma\models_{\cl{Mod}^*(\vdash)}z\approx\varphi_z$ for some
$\varphi_z=\varphi_z(\vec{x})\in T(X)$.
As $\mathcal{B}\in\cl{Mod}^*(\vdash)$ and $h[\Gamma]\subseteq F_B$, it follows that
\[
b=h(z)=h(\varphi_z)=\varphi^\sbB(h[\vec{x}]).
\]
Now $h[\vec{x}]$ consists of elements of $A$, and $\sbA\in\mathbb{S}(\sbB)$, so $b\in A$.  This
contradiction shows that $A=B$, as required.
\end{proof}

We can now prove the following bridge theorem, in which the need to
consider proper classes is eliminated for symbolically limited logics.

\begin{theorem}\label{bridge localized}
Let\/ $\,\vdash$ be an equivalential logic over an infinite subset\/ $V$ of the proper class\/ $\mathit{Var}$\textup{.}
Let\/ $\mathfrak{s}$
be the cardinality of the signature of\/ $\,\vdash$\textup{,} and assume that\/ $\,\vdash$ has an axiomatization that uses at most\/
$\mathfrak{m}$ variables, where\/
$\mathfrak{m}+\mathfrak{s}\leq\left|V\right|$\textup{.}
Then the following conditions are equivalent.
\begin{enumerate}
\item\label{bridge 1}
$\,\vdash$ has the
localized infinite Beth property with respect to\/ $V$\textup{.}

\item\label{bridge 2}
No member of\/
$\cl{Mod}^*(\vdash)$ with at most\/ $\left|V\right|$ elements has a proper\/ $\cl{Mod}^*(\vdash)$--epic submatrix.

\item\label{bridge 3}
All epimorphisms in\/ $\cl{Mod}^*(\vdash)$ are surjective.

\item\label{bridge 4}
The logic over\/ $\mathit{Var}$ induced by\/ $\,\vdash$
has the infinite Beth property.
\end{enumerate}
\end{theorem}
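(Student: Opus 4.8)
The plan is to establish the cycle $(\ref{bridge 1})\Rightarrow(\ref{bridge 2})\Rightarrow(\ref{bridge 3})\Rightarrow(\ref{bridge 4})\Rightarrow(\ref{bridge 1})$, assembling results already in place. For $(\ref{bridge 1})\Rightarrow(\ref{bridge 2})$ there is essentially nothing to do: the algebraic signature of $\,\vdash$ has cardinality $\mathfrak{s}$, so $\,\vdash$ has at most $\mathfrak{s}\leq\mathfrak{m}+\mathfrak{s}\leq\left|V\right|$ connectives, and Theorem~\ref{thm:pre bridge} then yields $(\ref{bridge 2})$ verbatim.

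For $(\ref{bridge 2})\Rightarrow(\ref{bridge 3})$, the key is that $\cl{Mod}^*(\vdash)$ is an $\mathfrak{m}$--generalized quasivariety. Since $\,\vdash$ is equivalential it has a set $\Delta$ of equivalence formulas; fixing an axiomatization $\Xi$ of $\,\vdash$ with $\left|\mathit{var}(\Xi)\right|\leq\mathfrak{m}$, Remark~\ref{axiomatizations 1} exhibits $\cl{Mod}^*(\vdash)$ as a $(\left|\mathit{var}(\Xi)\right|+\aleph_0)$--generalized quasivariety, hence, as $\mathfrak{m}$ is infinite, as an $\mathfrak{m}$--generalized quasivariety. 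Its signature comprises the $\mathfrak{s}$--many connectives together with one unary predicate $r$, so it has cardinality at most $\mathfrak{s}+1$, and $\mathfrak{m}+(\mathfrak{s}+1)=\mathfrak{m}+\mathfrak{s}\leq\left|V\right|$ because $\mathfrak{m}$ is infinite. Since substructures of a matrix, viewed as a first-order structure with one unary relation symbol, are precisely its submatrices, $(\ref{bridge 2})$ says that no member of $\cl{Mod}^*(\vdash)$ of cardinality at most $\left|V\right|$---and so none of cardinality at most $\mathfrak{m}+\mathfrak{s}+1$---has a proper epic substructure. By Theorem~\ref{thm:ESproperty}, $\cl{Mod}^*(\vdash)$ therefore has the ES property, which is $(\ref{bridge 3})$.

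For $(\ref{bridge 3})\Leftrightarrow(\ref{bridge 4})$, let $\,\vdash'$ denote the logic over $\mathit{Var}$ induced by $\,\vdash$. A routine check shows two things: the $V$--slice of $\,\vdash'$ coincides with $\,\vdash$ (its generating rules $h[\Gamma]/h(\varphi)$, for $\Gamma\vdash\varphi$ and $h$ an endomorphism of $\sbT(V)$, are already $\,\vdash$--derivable by substitution-invariance), and---the crux---$\cl{Mod}^*(\vdash')=\cl{Mod}^*(\vdash)$ (a matrix is a model of $\,\vdash'$ iff it validates the derivable rules of every slice; it validates those of the $V$--slice iff it is a model of $\,\vdash$; and the generating rules of the remaining slices add no constraint, since a valuation into the matrix may be precomposed with the homomorphism $\sbT(V)\mrig\sbT(Y)$ witnessing such a rule, reducing to the $V$--slice; restricting to reduced matrices yields the claim). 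Hence $\cl{Mod}^*(\vdash')$ is the prevariety $\cl{Mod}^*(\vdash)$, so $\,\vdash'$ is equivalential by Theorem~\ref{thm:equivalential}, and Theorem~\ref{thm:BH06} applies to $\,\vdash'$: it has the infinite Beth property iff every epimorphism in $\cl{Mod}^*(\vdash')=\cl{Mod}^*(\vdash)$ is surjective. That is precisely the equivalence of $(\ref{bridge 4})$ and $(\ref{bridge 3})$.

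Finally, for $(\ref{bridge 4})\Rightarrow(\ref{bridge 1})$: under $(\ref{bridge 4})$ the equivalential proper-class logic $\,\vdash'$ has the infinite Beth property, so Lemma~\ref{bp implies bpv}, applied with the infinite set $V\in\mathcal{P}(\mathit{Var})$, shows that the $V$--slice of $\,\vdash'$ has the localized infinite Beth property; as that slice is $\,\vdash$, this is $(\ref{bridge 1})$. The only step of real substance is the identification $\cl{Mod}^*(\vdash')=\cl{Mod}^*(\vdash)$ in $(\ref{bridge 3})\Leftrightarrow(\ref{bridge 4})$, which legitimizes importing Theorem~\ref{thm:BH06}; everything else is a direct citation of an earlier result or cardinal arithmetic rendered trivial by the infinitude of $\mathfrak{m}$.
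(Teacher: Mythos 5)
Your proposal is correct and follows exactly the same route as the paper: the cycle (\ref{bridge 1})$\Rightarrow$(\ref{bridge 2})$\Rightarrow$(\ref{bridge 3})$\Rightarrow$(\ref{bridge 4})$\Rightarrow$(\ref{bridge 1}) via Theorem~\ref{thm:pre bridge}, Remark~\ref{axiomatizations 1} with Theorem~\ref{thm:ESproperty}, the identification $\cl{Mod}^*(\vdash')=\cl{Mod}^*(\vdash)$ with Theorem~\ref{thm:BH06}, and Lemma~\ref{bp implies bpv}, respectively. You merely spell out some details (the cardinality checks and the verification that the induced logic has the same reduced matrix models) that the paper leaves as one-line assertions.
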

\begin{proof}
(\ref{bridge 1})$\;\Rig\;$(\ref{bridge 2}) instantiates Theorem~\ref{thm:pre bridge}.

(\ref{bridge 2})$\;\Rig\;$(\ref{bridge 3}): By Remark~\ref{axiomatizations 1},
$\cl{Mod}^*(\vdash)$ is a\/ $\left|V\right|$--prevariety,
so the present implication follows from Theorem~\ref{thm:ESproperty}.

(\ref{bridge 3})$\;\Rig\;$(\ref{bridge 4}): As $\,\vdash$ and the induced logic $\,\vdash'$ have the same
matrix models, $\cl{Mod}^*(\vdash)=\cl{Mod}^*(\vdash')$.  The implication therefore follows from Theorem~\ref{thm:BH06}.\nopagebreak

(\ref{bridge 4})$\;\Rig\;$(\ref{bridge 1}) instantiates Lemma~\ref{bp implies bpv}.
\end{proof}

\begin{corollary}\label{bridge localized cor}
Theorem~\textup{\ref{bridge localized}} remains true for algebraizable logics\/ $\,\vdash$ if we replace
the terms `submatrix' and\/ \textup{`}$\cl{Mod}^*(\vdash)$\textup{'} by\/ `subalgebra' and\/
\textup{`}$\cl{Alg}^*(\vdash)$\textup{',}
respectively.
\end{corollary}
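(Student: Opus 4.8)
The plan is to explain why Corollary~\ref{bridge localized cor} is essentially a transcription of Theorem~\ref{bridge localized} through the category isomorphism between $\cl{Mod^*}(\vdash)$ and $\cl{Alg}^*(\vdash)$ that is available for algebraizable logics. First I would recall that, by Theorem~\ref{thm:algebraizable}, when $\,\vdash$ is algebraizable the reduced matrix models are determined by their algebra reducts, so the forgetful map $\langle\sbA,F\rangle\mapsto\sbA$ is a bijection from $\cl{Mod^*}(\vdash)$ onto $\cl{Alg}^*(\vdash)$; moreover every matrix homomorphism between reduced models is already an algebra homomorphism, and conversely every algebra homomorphism $\sbA\mrig\sbB$ with $\langle\sbA,F\rangle,\langle\sbB,G\rangle\in\cl{Mod^*}(\vdash)$ automatically respects the filters, because $F$ and $G$ are the respective sets $\{a:\de_i^\sbA(a)=\ep_i^\sbA(a)\textup{ for all }i\in I\}$ given by the defining equations. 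Hence the concrete categories $\cl{Mod^*}(\vdash)$ and $\cl{Alg}^*(\vdash)$ are isomorphic, and this isomorphism carries submatrices to subalgebras, preserves cardinality of the underlying set, and matches epimorphisms with epimorphisms and surjections with surjections.

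Next I would run through the four conditions of Theorem~\ref{bridge localized} and check that each remains meaningful and equivalent after the substitution. Condition~(\ref{bridge 3}) becomes ``all epimorphisms in $\cl{Alg}^*(\vdash)$ are surjective'', which is equivalent to the matrix version by the category isomorphism just described. Condition~(\ref{bridge 2}) becomes ``no member of $\cl{Alg}^*(\vdash)$ with at most $\left|V\right|$ elements has a proper $\cl{Alg}^*(\vdash)$--epic subalgebra''; since the isomorphism preserves the underlying sets and their cardinalities and matches epic substructures with epic substructures, this is again equivalent to the matrix formulation. Conditions~(\ref{bridge 1}) and (\ref{bridge 4}), the localized and unrestricted infinite Beth properties, are properties of $\,\vdash$ itself and are not altered at all by replacing `submatrix' with `subalgebra' in the conclusion; moreover, since $\Delta$ is a set of equivalence formulas for an algebraizable $\,\vdash$, one may, by the remark following Definition~\ref{beth property sets}, state implicit and explicit definability intrinsically via $\Gamma\vdash\Delta(z,h(z))$ and $\Gamma\vdash\Delta(z,\varphi_z)$, so these conditions make no reference to matrices versus algebras.

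Then I would note that the hypotheses of Theorem~\ref{bridge localized} transfer verbatim: $\mathfrak{s}$ is still the cardinality of the (algebraic) signature, and by Remark~\ref{axiomatizations 2} an algebraizable $\,\vdash$ whose associated prevariety $\cl{Alg}^*(\vdash)$ is an $\mathfrak{m}$--generalized quasivariety can be axiomatized with at most $\mathfrak{m}$ variables, and conversely an axiomatization of $\,\vdash$ using at most $\mathfrak{m}$ variables yields, via Remark~\ref{axiomatizations 1} applied with $\cl{Alg}^*(\vdash)$ in place of $\cl{Mod^*}(\vdash)$ (legitimate because $\cl{Alg}^*(\vdash)$ is then defined by the same implications, read off through the defining equations), that $\cl{Alg}^*(\vdash)$ is a $(\mathfrak{m}+\aleph_0)$--generalized quasivariety. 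With $\mathfrak{m}+\mathfrak{s}\leq\left|V\right|$ this lets Theorem~\ref{thm:ESproperty} be applied to $\cl{Alg}^*(\vdash)$ directly, which is all that was needed in the step (\ref{bridge 2})$\Rig$(\ref{bridge 3}) of the original proof. The remaining implications go through as before, Theorem~\ref{thm:BH06} being applicable since the induced logic $\,\vdash'$ is again algebraizable with $\cl{Alg}^*(\vdash')=\cl{Alg}^*(\vdash)$.

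I do not anticipate a serious obstacle here; the only point requiring genuine care is verifying that an algebra homomorphism between the algebra reducts of two reduced models is automatically a matrix homomorphism, i.e.\ that it preserves the designated filters. This is where the algebraizability hypothesis (as opposed to mere equivalentiality) is used: it is precisely the content of Theorem~\ref{thm:algebraizable}'s description $F=\{a\in A:\de_i^\sbA(a)=\ep_i^\sbA(a)\textup{ for all }i\in I\}$, since a homomorphism respecting all operations necessarily sends a solution of the system $\de_i(x)\approx\ep_i(x)$ in $\sbA$ to a solution in $\sbB$, and the converse inclusion $h^{-1}[G]\subseteq F$ follows from the fact that, $\langle\sbA,h^{-1}[G]\rangle$ being a matrix model of $\,\vdash$ and $\langle\sbA,F\rangle$ its unique reduced companion over $\sbA$, one has $h^{-1}[G]=F$. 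Once this identification of the two categories is in place, the corollary is an immediate relabelling of Theorem~\ref{bridge localized}.
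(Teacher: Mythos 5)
Your argument is correct and takes essentially the same route as the paper, which disposes of the corollary in one line by applying to Theorem~\ref{bridge localized} the category isomorphism between $\cl{Mod}^*(\vdash)$ and $\cl{Alg}^*(\vdash)$ guaranteed by algebraizability (recorded already after Theorem~\ref{thm:algebraizable}); your expansion of why that isomorphism exists and why it transports each of the four conditions is exactly the intended content. The only blemish is your claim that $h^{-1}[G]=F$: for non-injective $h$ this need not hold, but it is also unnecessary, since matrix homomorphisms are only required to preserve (not reflect) the designated filter, and preservation follows from the equational description of $F$ as you note.
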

\begin{proof}
Apply to Theorem~\ref{bridge localized}
the category isomorphism between
$\cl{Mod}^*(\vdash)$ and $\cl{Alg}^*(\vdash)$ that is guaranteed by the algebraizability of $\,\vdash$.
\end{proof}

\begin{corollary}\label{bridge localized cor 2}
Let\/ $\,\vdash$ be a logic with a countable signature, over a denumerable set\/ $V$ of
variables.  Consider the following conditions.
\begin{enumerate}
\item\label{localized 1}
$\,\vdash$ has the localized infinite Beth property with respect to\/ $V$.
\item\label{localized 2}
$\cl{Mod}^*(\vdash)$ has the ES property.
\item\label{localized 3}
$\cl{Alg}^*(\vdash)$ has the ES property.
\end{enumerate}

If\/ $\,\vdash$ is finitary and equivalential, then\/ \textup{(\ref{localized 1})} and\/ \textup{(\ref{localized 2})} are
equivalent.

If\/ $\,\vdash$ is algebraized by a quasivariety, then\/ \textup{(\ref{localized 1})} and\/
\textup{(\ref{localized 3})} are equivalent.
\end{corollary}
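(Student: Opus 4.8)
The plan is to deduce both equivalences by specializing Theorem~\ref{bridge localized} (and, for the second, Corollary~\ref{bridge localized cor}) to the case of a denumerable set of variables. So I would first dispose of the cardinality hypotheses: since $V$ is denumerable and the signature of $\,\vdash$ is countable, the term algebra $T(V)$ is countable, hence $\,\vdash$ is axiomatized by some set $\Xi$ of rules with $\mathit{var}(\Xi)\subseteq V$, so that $\,\vdash$ admits an axiomatization using at most $\mathfrak{m}\seteq\aleph_0$ variables; as the cardinality $\mathfrak{s}$ of the signature is at most $\aleph_0$, we get $\mathfrak{m}+\mathfrak{s}=\aleph_0=\left|V\right|$. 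Thus every cardinal requirement of Theorem~\ref{bridge localized} is automatically satisfied in this setting.

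To prove that (\ref{localized 1}) $\Leftrightarrow$ (\ref{localized 2}) when $\,\vdash$ is finitary and equivalential, I would note that $\,\vdash$ then meets \emph{all} the hypotheses of Theorem~\ref{bridge localized} (with $V$ as the denumerable set of variables), so conditions (\ref{bridge 1}) through (\ref{bridge 4}) of that theorem are pairwise equivalent. It remains only to observe that (\ref{bridge 1}) \emph{is} the $V$--localized infinite Beth property, i.e.\ (\ref{localized 1}), and that (\ref{bridge 3})---the surjectivity of every epimorphism in $\cl{Mod}^*(\vdash)$---is precisely the statement that $\cl{Mod}^*(\vdash)$ has the ES property, i.e.\ (\ref{localized 2}).

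To prove that (\ref{localized 1}) $\Leftrightarrow$ (\ref{localized 3}) when $\,\vdash$ is algebraized by a quasivariety, I would use that $\,\vdash$ is then in particular algebraizable, and hence equivalential, so (exactly as above) the hypotheses of Theorem~\ref{bridge localized} hold; since $\,\vdash$ is algebraizable, Corollary~\ref{bridge localized cor} applies and yields the variant of Theorem~\ref{bridge localized} in which `submatrix' and `$\cl{Mod}^*(\vdash)$' are replaced throughout by `subalgebra' and `$\cl{Alg}^*(\vdash)$'. In that variant, (\ref{bridge 1}) is again (\ref{localized 1}), while (\ref{bridge 3}) now says that $\cl{Alg}^*(\vdash)$ has the ES property, i.e.\ it is (\ref{localized 3}); the equivalence of the two follows.

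I do not expect a genuine obstacle: the work is essentially the bookkeeping of matching conditions, and the only point deserving care is to confirm that the hypotheses assumed in each case actually deliver what Theorem~\ref{bridge localized} and Corollary~\ref{bridge localized cor} require---namely the equivalentiality of $\,\vdash$ (needed so that, via Remark~\ref{axiomatizations 1}, $\cl{Mod}^*(\vdash)$ is an $\aleph_0$--generalized quasivariety, and needed again to invoke Corollary~\ref{bridge localized cor}) together with the inequality $\mathfrak{m}+\mathfrak{s}\leq\left|V\right|$, which here is immediate since $\left|V\right|=\aleph_0$ and $T(V)$ is countable.
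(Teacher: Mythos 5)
Your proposal is correct and follows essentially the same route as the paper: check that, with $\mathfrak{m}=\aleph_0$, the hypotheses of Theorem~\ref{bridge localized} are met, then apply that theorem for the first claim and Corollary~\ref{bridge localized cor} for the second. The only cosmetic difference is that for the second claim the paper obtains the $\aleph_0$--variable axiomatization of $\,\vdash$ via Remark~\ref{axiomatizations 2}, whereas you observe it directly from the fact that $\,\vdash$ is a logic over the denumerable set $V$.
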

\begin{proof}
A finitary equivalential logic
with a
countable signature satisfies
the
initial
hypotheses of Theorem~\ref{bridge localized}
when $\mathfrak{m}=\aleph_0$.  For the second claim, use
Remark~\ref{axiomatizations 2} (with $\mathfrak{m}=\aleph_0$) and Corollary~\ref{bridge localized cor}.
\end{proof}

Even for algebraizable logics, the finiteness assumptions in the two claims of Corollary~\ref{bridge localized cor 2} are mutually independent: see \cite{Her96} and
\cite{Raf10}, respectively.

The \emph{finite Beth property} is defined like the infinite one, except that the set $Z$ in its definition is
required to be finite.  An equivalential
logic $\,\vdash$ over a proper class has this property iff $\cl{Mod}^*(\vdash)$ has the weak ES property
\cite[Thm.~3.14, Cor.~3.15]{BH06}.

Let us also define the ($V$--) \emph{localized finite Beth property} like its infinite analogue,
but stipulating that $Z$ be finite and substituting `$V\ld X$ is infinite' for
`$\left|V\ld(X\cup Z)\right|\geq\left|Z\right|+\aleph_0$'.

Theorem~\ref{bridge localized} and its corollaries have analogues for these properties.  We state only one,
wherein the cardinality of the signature plays no role.

\begin{theorem}\label{thm:bridge localized finite}
Let\/ $\,\vdash$ be a logic over a denumerable subset\/ $V$ of the proper class\/
$\mathit{Var}$\textup{.}
Consider the following conditions.
\begin{enumerate}
\item\label{localized finite 1}
$\,\vdash$ has the localized finite Beth property with respect to\/ $V$.
\item\label{localized finite 2}
No finitely generated member of\/ $\cl{Mod}^*(\vdash)$ has a proper\/ $\cl{Mod}^*(\vdash)$--epic
submatrix.
\item\label{localized finite 3}
$\cl{Mod}^*(\vdash)$ has the weak ES property.
\item\label{localized finite 4}
The logic over\/ $\mathit{Var}$ induced by\/ $\,\vdash$ has the finite Beth property.
\item\label{localized finite 5}
No finitely generated member of\/ $\cl{Alg}^*(\vdash)$ has a proper\/ $\cl{Alg}^*(\vdash)$--epic
subalgebra.
\item\label{localized finite 6}
$\cl{Alg}^*(\vdash)$ has the weak ES property.
\end{enumerate}

If\/ $\,\vdash$ is finitary and finitely equivalential, then\/ \textup{(\ref{localized finite 1})--(\ref{localized finite 4})} are
equivalent.

If\/ $\,\vdash$ is algebraized by a quasivariety, then\/
\textup{(\ref{localized finite 1}),\,(\ref{localized finite 4}),\,(\ref{localized finite 5})} and\/ \textup{(\ref{localized finite 6})}
are equivalent (even if\/ $\,\vdash$ is not finitary).
\end{theorem}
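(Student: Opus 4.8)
The plan is to derive Theorem~\ref{thm:bridge localized finite} as the ``finite'' analogue of Theorem~\ref{bridge localized} and its corollaries, re-using the same four ingredients but in their finite versions: Theorem~\ref{thm:weakES} (finitely generated structures suffice for the weak ES property in a quasivariety) in place of Theorem~\ref{thm:ESproperty}; the finite Beth characterization \cite[Thm.~3.14, Cor.~3.15]{BH06} in place of Theorem~\ref{thm:BH06}; a localized-finite analogue of Lemma~\ref{bp implies bpv}; and a localized-finite analogue of Theorem~\ref{thm:pre bridge}. The category isomorphism of Theorem~\ref{thm:algebraizable} then transfers matrix statements to algebra statements, yielding the equivalences among (\ref{localized finite 1}),\,(\ref{localized finite 4}),\,(\ref{localized finite 5}),\,(\ref{localized finite 6}) in the algebraizable case.

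Concretely, I would prove the cycle (\ref{localized finite 1})$\Rig$(\ref{localized finite 2})$\Rig$(\ref{localized finite 3})$\Rig$(\ref{localized finite 4})$\Rig$(\ref{localized finite 1}) under the hypothesis that $\,\vdash$ is finitary and finitely equivalential. For (\ref{localized finite 1})$\Rig$(\ref{localized finite 2}): adapt the proof of Theorem~\ref{thm:pre bridge}, but now a finitely generated $\mathcal{B}=\langle\sbB,F_B\rangle\in\cl{Mod}^*(\vdash)$ is generated by finitely many elements, so one only needs finitely many variables $z$ (take $Z$ finite, mapping onto $B\bs A$ under a homomorphism $h$ from $\sbT$ on a denumerable variable set), and ``$V\bs X$ infinite'' is exactly what is available; the rest of the argument (showing $\Gamma$ defines $Z$ implicitly in terms of $X$ via the $\cl{Mod}^*(\vdash)$-epicity of $\mathcal{A}$, then using the explicit definition $\varphi_z\in T(X)$ to force $b\in A$) goes through verbatim. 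For (\ref{localized finite 2})$\Rig$(\ref{localized finite 3}): this is precisely Theorem~\ref{thm:weakES} applied to the quasivariety $\cl{Mod}^*(\vdash)$ (which is a quasivariety because $\,\vdash$ is finitary and finitely equivalential, by Remark~\ref{axiomatizations 1}). For (\ref{localized finite 3})$\Rig$(\ref{localized finite 4}): as in Theorem~\ref{bridge localized}, the induced logic $\,\vdash'$ has $\cl{Mod}^*(\vdash')=\cl{Mod}^*(\vdash)$, and weak ES for this prevariety is equivalent to the finite Beth property of $\,\vdash'$ over $\mathit{Var}$ by \cite[Thm.~3.14, Cor.~3.15]{BH06}. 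For (\ref{localized finite 4})$\Rig$(\ref{localized finite 1}): adapt Lemma~\ref{bp implies bpv}, replacing the cardinality bookkeeping $|\ov Z|\leq|Z|+\aleph_0\leq|W|$ by the simpler observation that $\ov Z=\mathit{var}(h[Z])$ is finite (since $Z$ is finite and each term has finitely many variables) while $W=V\bs X$ is infinite, so the splitting maps $f,g$ with $g\circ f=\textup{id}$ still exist; the substitution argument via Definition~\ref{def:logic class}(\ref{substitution invariance2}) is unchanged.

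For the second claim, when $\,\vdash$ is algebraized by a quasivariety (so $\cl{Alg}^*(\vdash)$ is a quasivariety, not necessarily with $\,\vdash$ finitary), the category isomorphism between $\cl{Mod}^*(\vdash)$ and $\cl{Alg}^*(\vdash)$ supplied by Theorem~\ref{thm:algebraizable} turns submatrices into subalgebras and matrix epimorphisms into algebra epimorphisms and conversely, and it respects ``finitely generated''; hence (\ref{localized finite 5})$\Leftrightarrow$(\ref{localized finite 2}) and (\ref{localized finite 6})$\Leftrightarrow$(\ref{localized finite 3}). Applying Theorem~\ref{thm:weakES} directly to the quasivariety $\cl{Alg}^*(\vdash)$ gives (\ref{localized finite 5})$\Leftrightarrow$(\ref{localized finite 6}) outright. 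The implications (\ref{localized finite 1})$\Rig$(\ref{localized finite 2}) and (\ref{localized finite 4})$\Rig$(\ref{localized finite 1}) established above do not use finitariness of $\,\vdash$, and (\ref{localized finite 3})$\Rig$(\ref{localized finite 4}) uses only the prevariety structure and \cite[Thm.~3.14, Cor.~3.15]{BH06}; chaining these with the isomorphism closes the loop (\ref{localized finite 1}),\,(\ref{localized finite 4}),\,(\ref{localized finite 5}),\,(\ref{localized finite 6}) even without finitary $\,\vdash$.

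\textbf{Main obstacle.} The delicate point is the localized-finite analogue of Theorem~\ref{thm:pre bridge}, i.e.\ (\ref{localized finite 1})$\Rig$(\ref{localized finite 2}): one must check that a finitely generated reduced model $\mathcal{B}$ really forces $Z$ to be finite (not merely countable) in the implicit-definability setup, and that the ``$V\bs X$ infinite'' clause of the localized finite Beth property is exactly the residual-variable hypothesis needed to run the Blok--Hoogland epicity argument inside $\sbT(V)$. This requires care in choosing the homomorphism $h$ from a term algebra on a denumerable variable set onto $\sbB$ so that only finitely many $z$'s are needed to cover $B\bs A$ (use a finite generating set of $\sbB$ and close off under the algebra operations, noting $B\bs A$ meets the generated subalgebra's complement appropriately) while keeping $X$ co-infinite in $V$. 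Everything else is routine transcription of the infinite-case arguments with ``finite'' substituted for the cardinal bounds.
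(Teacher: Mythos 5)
Your proposal is correct and takes essentially the same route as the paper: the paper likewise adapts Theorem~\ref{thm:pre bridge} for (\ref{localized finite 1})$\Rig$(\ref{localized finite 2}) by arranging that $h$ map $V'$ onto a \emph{finite generating set} of $\sbB$ (so that $b=\psi^\sbB(h[\vec{x}],h[\vec{z}])$ for finite $\vec{z}\in Z$, to which the explicit definitions are then applied), obtains (\ref{localized finite 2})$\Rig$(\ref{localized finite 3}) and (\ref{localized finite 5})$\Rig$(\ref{localized finite 6}) from Theorem~\ref{thm:weakES} via Remark~\ref{axiomatizations 1}, and otherwise transcribes Theorem~\ref{bridge localized} and its corollaries. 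Your one loose phrase --- that $Z$ ``maps onto $B\bs A$'' --- is not literally achievable when $B\bs A$ is infinite, but your own ``main obstacle'' paragraph already supplies the correct fix (generators plus term closure), matching the paper.
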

\begin{proof}
(\ref{localized finite 1})\,$\Rig$\,(\ref{localized finite 2}):  Adapt the proof of Theorem~\ref{thm:pre bridge},
arranging that $h$ maps $V'$ onto a finite generating set for $\sbB$.  At the end, instead
of ${b=h(z)}$, we have $b=\psi^\sbB(h[\vec{x}],h[\vec{z}])$ for a suitable term $\psi$ and finite sequences $\vec{x}\in X$
and $\vec{z}\in Z$.  Apply the original argument to the items in $\vec{z}$.

Both
(\ref{localized finite 2})\,$\Rig$\,(\ref{localized finite 3}) and (\ref{localized finite 5})\,$\Rig$\,(\ref{localized finite 6})
instantiate Theorem~\ref{thm:weakES}, because $\cl{Mod}^*(\vdash)$ is a quasivariety even in the former case
(see Remark~\ref{axiomatizations 1}).

Otherwise, the proof is like that of Theorem~\ref{bridge localized} and
its corollaries.
\end{proof}

For equivalential logics, the meaning of the finite Beth property
is not affected if we stipulate in its definition that $Z$ be a
singleton.  This is deduced from relevant
bridge theorems in \cite[Cor.~3.15]{BH06}, and the argument
applies equally to the localized finite Beth property.  Therefore,
by the proof in \cite{Kre60}, all six conditions of
Theorem~\ref{thm:bridge localized finite} hold when $\,\vdash$ is
an axiomatic extension of intuitionistic propositional logic, or of
its positive fragment (cf.\ Remark~\ref{remk:no improvement}).

For simplicity, we have confined the above discussion to sentential logics, but the results of this section extend
straightforwardly to the `$k$--deductive systems' of \cite{BH06,BP92} and to Gentzen systems as formulated, for
instance, in \cite{Raf06}.

\end{document}